\documentclass[12pt]{article}

\usepackage{amssymb}
\usepackage{amsthm}
\usepackage{amsmath} %txfonts mathtools
\usepackage{stmaryrd}
\usepackage{mathrsfs,bm,hyperref}
\usepackage{soul,xcolor}
\usepackage{mathrsfs}

\usepackage{tikz}
\usepackage{tikz-3dplot}
\usetikzlibrary{shapes.geometric}
\usepackage{pgfplots}
\pgfplotsset{compat=1.18}
\usetikzlibrary{patterns}
\usetikzlibrary{perspective}
\usetikzlibrary{arrows}
\usetikzlibrary{3d,calc}
\usetikzlibrary{angles, calc, intersections, quotes, arrows.meta}
\usepackage{accents}
\usepackage{rotating}
\usepackage{soul}
\usepackage{graphicx}

\usepackage{mathtools}
\usepackage{enumitem}
\usepackage{isomath}

\usepackage{cite}
\usepackage[us,12hr]{datetime}

\usepackage{dsfont}
\usepackage[scr=boondox]{mathalpha}
\usepackage{nicematrix}
\newcommand{\bpsi}{\bm \psi}

\newcommand{\de}{\partial}
\renewcommand{\div}{\mathrm {div\,}}

\newcommand{\ep}{\varepsilon}
\newcommand{\ept}{\hat{\ep}}
\renewcommand{\leq}{\leqslant}

\newcommand{\mut}{\hat{\mu}}

\newcommand{\n}{\bm n}

\newcommand{\om}{\omega}
\newcommand{\pperp}{B}
\newcommand{\rf}[1]{(\ref{#1})}
\renewcommand{\rho}{\varrho}

\newcommand{\curl}{\mathrm{curl\,}}

\renewcommand{\u}{\bm u}
\newcommand{\ut}{\tilde{\u}}

\renewcommand{\v}{\bm v}

\newcommand{\w}{\bm w}

\newcommand{\x}{{\bm x}}
\newcommand{\xt}{\hat{\x}}

\newcommand{\z}{{\bm 0}}

\DeclareMathAlphabet{\msfsl}{OT1}{cmss}{m}{sl}

\newcommand{\B}{B}
\newcommand{\Bb}{\bm B}
\newcommand{\C}{{\mathbb C}}

\newcommand{\Db}{\bm D}
\newcommand{\Eb}{\bm E}

\newcommand{\Fb}{\bm F}

\newcommand{\Hb}{\bm H}
\newcommand{\K}{\bm K}
\renewcommand{\L}{\bm {\mathscr L}}
\newcommand{\M}{M}

\newcommand{\Om}{\Omega}

\newcommand{\Q}{\bm {\mathscr Q}}
\newcommand{\R}{{\mathbb R}}
\newcommand{\Res}{\bm {\mathscr R}}
\newcommand{\T}{{\mbox{\tiny \textsf{T}}}}

\newcommand{\V}{\bm V}

\newcommand{\W}{\bm W}

\newtheorem{theorem}{Theorem}[section]

\newtheorem{lemma}{Lemma}[section]

\newtheorem{remark}{Remark}

\newcommand{\I}{\ensuremath{\mathrm{i}}}
\newcommand{\E}{\ensuremath{\mathrm{e}}}

\DeclareMathAlphabet{\mathsfit}{T1}{\sfdefault}{m}{sl}
\SetMathAlphabet{\mathsfit}{bold}{T1}{\sfdefault}{bx}{sl}

\begin{document}
\setstcolor{red}

\title{Embedding transmission problems for Maxwell's \\ equations into elliptic theory}
\author{Yuri A. Godin\footnote{email: ygodin@charlotte.edu} ~and Boris Vainberg\footnote{email: brvainbe@charlotte.edu} \vspace{5mm} \\
Department of Mathematics and Statistics, \\ University of North Carolina at Charlotte, \\
Charlotte, NC, USA}

\date{\today}

\maketitle

\begin{abstract}
We embed general boundary value problems for the time-harmonic Maxwell equations into the elliptic boundary value theory. This is achieved by introducing two new scalar functions to the electromagnetic field and imposing additional boundary conditions, after which the problem becomes elliptic. The results are applied to general problems for Maxwell's equations in bounded and unbounded domains, as well as to the transmission problem with inhomogeneities on the right-hand side of the equations and at all boundaries. Relations between the inhomogeneities of the elliptic problem are established that provide a one-to-one correspondence between the solutions of Maxwell's problem and the solutions of the elliptic boundary value problem. 
As an application, the existence of an orthogonal basis from the eigenfunctions of the general Maxwell's problem is justified.
\end{abstract}

% \bigskip
% {\bf Keywords:} 
% Maxwell equations;
% Transmission problem;
% Shapiro-Lopatinsky condition; \\

% MATHEMATICS SUBJECT CLASSIFICATION: \\

% \noin
% 35Q61  	Maxwell equations \\
% 35B65  	Smoothness and regularity of solutions to PDEs\\
% 35Q60  	PDEs in connection with optics and electromagnetic theory\\
% 35J57  	Boundary value problems for second-order elliptic systems\\
% 35B65  	Smoothness and regularity of solutions to PDEs\\

\section{Introduction}
\setcounter{equation}{0}
\label{I}

It is well known that the system of time-harmonic Maxwell's equations is not elliptic, and, therefore, a boundary value problem (BVP) for the Maxwell system cannot be elliptic. Many (but not all) general statements of elliptic theory have been independently proved for Maxwell’s equations; see, for example, \cite{Muller:1969,  Weber:1981, Alberti:2014, Alberti:2018, ColtonKress:2019}. Reducing boundary value problems for Maxwell's equations to an elliptic problem allows one to get these results immediately by appealing to elliptic theory and obtain new facts like smoothness of the solutions, local a priori estimates, reduction to integral equations, asymptotics of the spectrum, and more. 
It is therefore not surprising that there is interest in incorporating Maxwell's equations into elliptic theory \cite{Picard:1986, Costabel:1991, Weck:1974, Birman:1987, Birman:1989, Monk:2003, ColtonKress:2019}.

We will consider general BVPs for Maxwell's system. Its reduction to an elliptic problem is trivial in the simple case of the Maxwell equations in the whole space
\begin{align}
\label{MR}
 \curl \Eb &= \I \om \mu \Hb,~~ \quad
  \curl \Hb = - \I \om \ep \Eb,~~\quad \x\in\mathbb R^3,    
\end{align}
with constant dielectric permittivity  $\ep$ and magnetic permeability $\mu$ of the medium. By applying the operator $\curl$ to the equations above, one can reduce them to separate Helmholtz equations for $\Eb $ and $\Hb$.
\[
\Delta \Eb+\om^2\mu\ep\Eb=0, ~~\quad \Delta \Hb+\om^2\ep\mu\Hb=0,~~\quad \x\in\mathbb R^3.
\]
This approach can be generalized to the case when $\ep$ and $\mu$ are smooth functions (or matrix functions) of $\x$. The result is an elliptic system for $(\Eb,~\Hb)$ instead of a separate equation for each component. 
However, such a reduction to an elliptic system is possible only in the entire space $\mathbb{R}^3$, and even then, there is a drawback: differentiating the original problem requires additional smoothness of the data.

The main difficulties arise when Maxwell's equations are considered in a domain $\Om\in\mathbb R^3$ (bounded or unbounded), since it is unclear what additional boundary conditions need to be added to the problem for the Helmholtz equations to ensure both the ellipticity of the BVP and the connection with the original BVP for Maxwell's equations. 
We recall that the ellipticity of a BVP requires not only the ellipticity of the equations but also the ellipticity of the boundary conditions, i.e., fulfillment of the Shapiro-Lopatinsky condition \cite{Lopatinsky:1953, Shapiro:1953, ADN:59, Lions:1972, Egorov:1992} (also called the Agmon-Douglis-Nirenberg condition or the covering condition for the boundary operators). A detailed description of this condition will be provided in the proof of Theorem \ref{t1}.

The situation worsens significantly for the transmission problem,
when $\Om $ contains a subdomain $\Om_-$ and the matrix functions $\ep$ and $\mu$ have discontinuities at the interface $\Gamma = \de\Om_-$. Then, specific interface conditions hold on $\Gamma$ for Maxwell's problem, and additional boundary conditions on $\Gamma$ should be introduced for the solutions of the Helmholtz equation. 
It is not clear what additional conditions ensure the ellipticity of the problem for the Helmholtz equation and the connection between the elliptic problem and the transmission problem for Maxwell's equations.

In \cite{Picard:1986, Birman:1987, Birman:1989}, a different approach was proposed to ellipticize Maxwell's equations.
Instead of reducing to Helmholtz equations, the authors add
 two unknown scalar functions $\alpha, \beta$ to the electromagnetic field $(\Eb, \Hb)$ and consider an extended system of eight first-order equations with eight unknowns $(\Eb, \alpha, \Hb, \beta)$
 in $\Om$ with variable $\ep$ and $\mu$. The obtained system is elliptic in the interior domain in the Petrovsky (Agmon-Douglis-Nirenberg) sense. The authors do not consider transmission problems for Maxwell's equations and also restrict themselves to a perfectly conducting boundary. Under these constraints, the boundary conditions on $\de \Om$ for Maxwell's problem are homogeneous, and this allows one to define the domain of the operator of the constructed elliptic system simply as the closure in the Sobolev space $H^1$ of smooth functions in $\R^3$, vanishing outside $\Om$.

For the same elliptic system of eight equations involving $(\Eb, \alpha, \Hb, \beta)$, we construct a class of nonhomogeneous boundary conditions with inhomogeneities on the right-hand side of the equations and the boundaries $\Gamma = \de \Om_-, ~\de \Om$, that make the entire BVP elliptic, and this problem has the following property.
Let $\Om$ be a bounded domain with a subdomain $\Om_{-}$ located strictly inside $\Om$,  and
% $\Om$ of the form 
% $\Om=\bar{\Om}_-\bigcup\Om_+$, 
$\Om_+ = \Om \setminus \bar{\Om}_-$, see Figure \ref{domain}.
Consider BVPs for Maxwell's equations with solutions in the Sobolev space $H^1 (\Om \setminus \Gamma)$, and with nonhomogeneous boundary conditions at $\Gamma$ and $\de \Om$ belonging to the Sobolev space $H^{1/2}$. 
For {\it any given Maxwell problem}, we explicitly describe the inhomogeneities (right-hand sides of the equations and boundary conditions) of the extended problem, such that a simple relation $(\Eb, \Hb) \leftrightarrow (\Eb, 0, \Hb, 0)$ holds between the solutions of the Maxwell problem and the solutions of the elliptic BVP.

The arguments and results of the paper obtained below for the transmission problem for the time-harmonic electromagnetic field in a bounded domain $\Om$
can be applied without any changes to problems in 
the external region $\R^3 \setminus \Om$ or the entire space when the matrices $\ep$ and $\mu$ approach fast enough to constant matrices at infinity and the appropriate radiation conditions are imposed. 

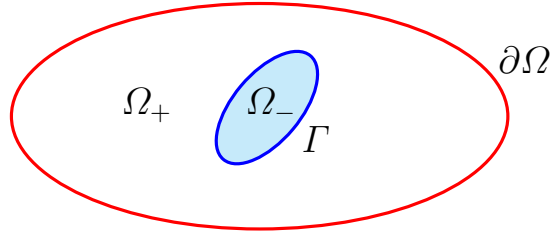
\begin{figure}[ht]
\begin{center}
\begin{tikzpicture}[scale=1.5,>=Stealth]

\draw[red, very thick] (0,0) ellipse (2.2cm and 1cm);
\draw[red, very thick, blue,text = black, fill = cyan!20,rotate=-40] (0,0.1) ellipse (0.3cm and 0.6cm);

\node at (-1,0.1) {\mbox{\large $\Om_{+}$}};
\node at (2.35,0.5) {\large $\de \Om$};
\node at (0.1,0.1) {\large $\Om_{-}$};
% \node at (0.35,0.4) {\large $\ep_{-}$};
% \node at (-0.1,-0.25) {\large $\mu_{-}$};
% \node at (1.3,0.1) {\large $\ep_{+}, \mu_{+}$};
\node at (0.5,-0.2) {\large $\Gamma$};

\end{tikzpicture}
\end{center}
\vspace{-5mm}
\caption{
Geometry of the transmission problem. The domain $\Om$ contains an inclusion $\Om_{-}$, bounded by $\Gamma = \de \Om_{-}$, and $\Om_{+} = \Om \setminus \bar{\Om}_-$.}
\label{domain}
\end{figure}
 
The boundaries $\Gamma = \de \Om_{-}$ and $\de\Om$ are assumed to be Lipschitz. The material parameters under consideration $\ep = \ep (\x)$ and $\mu = \mu (\x)$ can be either complex-valued functions with positive real parts or matrix functions with positive-definite real parts (the medium is anisotropic in the latter case). 

Denote $\ep=\ep_\pm, \mu=\mu_\pm$ when $\x\in\Om_\pm$.
It is assumed that
\begin{align}
\label{w1i}
 \ep_\pm,~\mu_\pm \in W^{1,\infty}(\Om_\pm).
\end{align}
The latter space consists of matrix-valued functions whose entries and first derivatives are essentially bounded in $\Om_\pm$. 

To make our approach more transparent, we consider, in the next section, a simple transmission problem for Maxwell's equations with inhomogeneities only on $\de \Om$ and reduce this problem to an elliptic one.  In section \ref{Rel}, we establish a one-to-one correspondence between the solutions of Maxwell's equations with inhomogeneities only on $\de \Om$ and solutions of the elliptic problem. At the end of the section, these results are extended to the case when the inhomogeneities are also present at the interface. The general problems (with charges and currents present in the equations) are considered in section \ref{sec4}. Section \ref{sec5} addresses the application of the embedding of Maxwell's equations into elliptic theory. We utilized the meromorphic dependence of the resolvent of the constructed elliptic problem on the parameter $\om$ to prove the discreteness of the spectrum of the Maxwell problem in a bounded domain. 
In the case where $\ep$ and $\mu$ are real positive-definite matrices, the existence of an orthogonal basis of eigenfunctions for the Maxwell problem is established.

All vectors in the paper are understood as column vectors, although we write them for convenience as row vectors.

\section{Reduction of Maxwell's equations to an elliptic problem}
\setcounter{equation}{0}
\label{R}

 We start with Maxwell's equations in the absence of charges and currents. Then the problem has the form
\begin{align}
\label{Mone}
 \curl \Eb &= \I \om \mu \Hb,~~ \quad
  \curl \Hb = - \I \om \ep \Eb,~~\quad \x\in\Om\setminus\Gamma, \quad \Gamma=\de\Om_-.
\end{align}
where $\ep = \ep(\x)$ and $\mu = \mu (\x)$ are the permittivity and permeability of the medium, respectively, satisfying the conditions imposed in the Introduction. It is convenient to rewrite equations \rf{Mone} in the matrix form
\begin{align}
\label{Mone-vec}
    \I \left(
    \begin{array}{cc}
        0 & \curl  \\
   -\curl & 0
    \end{array}
    \right) \left(
    \begin{array}{c}
        \Eb \\
        \Hb
    \end{array}
    \right) = \om \left(
    \begin{array}{cc}
        \ep & 0  \\
          0 & \mu
    \end{array}
    \right) \left(
    \begin{array}{c}
        \Eb \\
        \Hb
    \end{array}
    \right).
\end{align}

From now on, the notation $H^s$ will be used for Sobolev spaces of functions as well as for Sobolev spaces of vector functions. We will look for solutions $\Eb,\Hb$ in the space  $H^1(\Om\setminus\Gamma)$ that consists of functions defined on $\Om$ whose restrictions to $\Om_\pm $ belong to $H^1(\Om_\pm)$, respectively. Note that $\Gamma$ and $\de\Om$ are Lipschitz surfaces, and therefore, the trace operators 
\begin{align*}
    t_\pm : ~H^1(\Om\setminus\Gamma) \to H^{1/2}(\Gamma), \quad
    t_0 : ~H^1(\Om\setminus\Gamma) \to H^{1/2}(\de \Om),
\end{align*}
are bounded. Here, the subscript $\pm$ indicates whether the trace is taken from $\Om_+$ or from $\Om_-$.
 
 The choice of the space for $\ep_\pm, \mu_\pm$ is also very natural since this space is the multiplier algebra of $H^1$,
and the inclusion \rf{w1i} is both sufficient and (essentially) necessary for $\ep\Eb, \mu\Hb\in H^1(\Om\setminus\Gamma)$ with arbitrary functions  $\Eb, \Hb\in H^1(\Om\setminus\Gamma)$.
 
Let us describe the boundary conditions on $\Gamma$ and $\de\Om$. Denote by $\n$ the unit normal vector to $\Gamma$ and $\de\Om$ that is exterior to $\Om_-$ and
$\Om$, respectively. Let $\Eb_\tau=\n\times \Eb$, $\Hb_\tau=\n\times \Hb$ be the tangential components of $\Eb,\Hb$ rotated by $90^\circ$ in the tangent plane. Note that if the $x_3$-axis is directed along $\n$ and $\Eb=(E_1,E_2,E_3)$, then $\n\times \Eb=(-E_2,E_1,0)$ (and the tangential component $(E_1,E_2,0)$ without additional rotation is given by $\n\times(\Eb \times \n)$).
Denote by $\llbracket u \rrbracket$ the jump of the function $u$ across $\Gamma$, equal to the limiting value for $\x \in \Om_{-}$ subtracted from the limiting value for $\x \in \Om_{+}$. 

The transmission problem consists in finding solutions
$\Eb, \Hb$ of \rf{Mone} in the space $ H^1(\Om\setminus\Gamma)$ that satisfy the boundary conditions:
\begin{align}
\label{Ma}
 \llbracket\Eb_\tau\rrbracket=&0, \quad 
  \llbracket\Hb_\tau\rrbracket=0, \quad \x\in \Gamma, \\[2mm]
 \Eb_\tau=&\Eb_\tau^0\in H^{1/2} (\de\Om). 
 \label{Mc}
\end{align}

In most textbooks and other publications on Maxwell's equations, the boundary conditions \rf{Ma}, \rf{Mc} also include conditions on the scalar values $D_\nu=\n\cdot(\ep\Eb),~ B_\nu=\n\cdot(\mu\Hb)$ of the normal components of the electric displacement field $\Db = \ep \Eb$ and magnetic induction $\Bb = \mu \Hb$. It is not always a mistake, but one should keep in mind that equations \rf{Mone} relate the values of $ B_\nu, D_\nu$ and $\Eb_\tau, \Hb_\tau$, respectively. These relations also include the trace at the boundary of the nonhomogeneous right-hand side in  \rf{Mone} if these terms are added to \rf{Mone}. Thus, the normal components of the fields at the boundary cannot be specified independently but must be evaluated through the tangential components.

The boundary value problem \rf{Mone}-\rf{Mc} is not elliptic, and the system \rf{Mone} itself is not elliptic. 
% Many (but not all) general statements of elliptic theory have been independently proved for Maxwell's equations. 
Some particular cases in which the problem \rf{Mone}-\rf{Mc} can be replaced by an elliptic one have been discussed in the introduction. An equivalent approach is based on the representation of the electric field as a sum of vector and scalar potentials and a reduction of the original problem to an elliptic one for the potentials. These approaches do not allow one to consider the transmission problem and break the symmetry between $\Eb$ and $\Hb$, affecting the smoothness result for $\Hb$. 

Consider the following extended version of the problem \rf{Mone}-\rf{Mc}. The new system contains two additional scalar functions $\alpha, \beta$
\begin{alignat}{3}
\label{L1}
 \curl \Eb+\mu \nabla \alpha &= \I \om \mu \Hb,~~ &\quad~
  \curl \Hb+\ep \nabla\beta &= - \I \om \ep \Eb,~~&\quad~ &\x\in\Om\setminus\Gamma \\[2mm]
  \label{L2}  -\div(\ep\Eb)&=0, &\quad  -\div(\mu\Hb)& =0,~&\quad~ &\x\in\Om\setminus\Gamma.
\end{alignat}
The minus signs in \rf{L2} are added to make the operators symmetric in section \ref{sec5} devoted to the resolvents.
We look for solutions $\u = (\Eb, \alpha, \Hb , \beta)$ of \rf{L1}, \rf{L2} whose components belong to the vector or scalar space $H^1(\Om\setminus\Gamma)$ and satisfy the following boundary conditions:
\begin{alignat}{4}
\label{La}
 \llbracket\Eb_\tau\rrbracket&=\Eb_\tau^\Gamma, &\quad ~ \llbracket\n\cdot \ep \Eb\rrbracket&=D_\nu^\Gamma, &\quad ~\llbracket\beta\rrbracket &=& \beta^\Gamma , \quad~&\x\in \Gamma, \\[2mm]
 \label{Lb}
\llbracket\Hb_\tau\rrbracket&=H_\tau^\Gamma, &\quad ~ \llbracket\n\cdot \mu \Hb\rrbracket&=B_\nu^\Gamma, &\quad ~\llbracket\alpha\rrbracket&=& \alpha^\Gamma , \quad~&\x\in \Gamma, \\[2mm]
 \Eb_\tau &= \Eb_\tau^0, & \quad \n\cdot \mu \Hb&=B_\nu^0, &\quad~  \beta&=&\beta^0, \quad~ &\x\in \de\Om,
 \label{Lc}
\end{alignat}
where the right-hand sides in \rf{La}-\rf{Lc} are prescribed and belong to the (vector or scalar) space $ H^{1/2} (\Gamma)$ in \rf{La}, \rf{Lb} and $ H^{1/2} (\de\Omega)$ in \rf{Lc}. Similarly to \rf{Mone-vec}, it is expedient to rewrite \rf{L1},\rf{L2} as follows: 
\begin{align}
    \L \u := \I \left(
    \begin{array}{cc}
        \z & \L_{\ep,\mu}  \\ [2mm]
   -\L_{\mu,\ep} & \z
    \end{array}
    \right) \u = \om \B \,
     \u,
\end{align}
where
\begin{align}
\label{Lem}
     \L_{\ep,\mu} = \left(
    \begin{array}{cc}
      \curl & \ep \nabla \\[2mm]
      -\div (\mu \cdot) & 0
    \end{array}
    \right), \quad
    \B = \begin{pNiceArray}{cccc}
  \ep &  & \Block{2-2}< >{\z} \\[-1mm]
   & 0 \\
  \Block{2-2}< >{\z} && \mu &  \\[-1mm]
  &&  & 0
\end{pNiceArray},
\end{align}
and $\L_{\mu,\ep}$ is obtained from $\L_{\ep,\mu}$ by interchanging $\ep$ and $\mu$.
\begin{remark}\label{R1}Note that the extended problem has two additional unknown scalar functions, $\alpha$ and $\beta$,  two additional equations \rf{L2}, and the number of boundary conditions increases substantially. For example, the transmission problem for the Maxwell equation has four scalar relations for fields on $\Gamma$ while the extended problem has eight boundary conditions on $\Gamma$.
\end{remark}
To study the relation between problem \rf{L1}-\rf{Lc} and the transmission problem for the Maxwell equations, we need to express the normal component of $\curl\Fb$ for an arbitrary sufficiently smooth vector field $\Fb$ in a neighborhood of $\Gamma$ or $\de \Om$ through the tangential component $\Fb_\tau$ of the field. Let us fix a point $\x_0$ on the boundary and choose the system of coordinates for which the directions of the axis $x_3$ and $\n$ coincide. If $\Fb=(F_1,F_2,F_3)$ in the new coordinate system, then the scalar value $(\curl\Fb)_\nu=\n\cdot \curl \bm F$ of the last (normal) component of $\curl\Fb$ is $(F_2)'_x-(F_1)'_y$. We denote the last expression by $-\div_\tau\Fb_\tau$ since $\Fb_\tau=(-F_2,F_1,0)$ (see the second paragraph of section \ref{R}). This expression contains the tangential derivatives of the tangential components of the field $\Fb$ and does not depend on the rotation of the coordinate system. Thus, the following statement is valid.
\begin{lemma} \label{L00} 
For an arbitrary sufficiently smooth vector field $\Fb$, the scalar value of the normal component of $\curl\Fb$ can be expressed through the tangential derivatives of the tangential components of the field $\Fb$, and the following relation is valid:
\begin{align}
\label{l3.1}
\n\cdot\curl\Fb:=(\curl\Fb)_\nu=-\div_\tau\Fb_\tau, \quad \x\in \Gamma ~~ or ~~\de\Om.
\end{align}
If $\Fb \in H^1(\Om\setminus\Gamma)$, then the above relation is valid in the space $H^{-1/2} (\Gamma)$ and in $H^{-1/2} (\de\Om)$.
\end{lemma}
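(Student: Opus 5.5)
The plan has two stages: first the identity for smooth fields by a local computation, then the $H^1$ case by density, with the surface divergence defined weakly through Stokes' theorem.

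\emph{Smooth case.} For smooth $\Fb$ near a smooth piece of $\Gamma$ or $\de\Om$, fix $\x_0$ and rotate coordinates so that the $x_3$-axis points along $\n(\x_0)$. Then $(\curl\Fb)_\nu=\de_1F_2-\de_2F_1$, and with $\Fb_\tau=(-F_2,F_1,0)$ this equals $-(\de_1(-F_2)+\de_2F_1)=-\div_\tau\Fb_\tau$ at $\x_0$. To see that the right-hand side is intrinsic, i.e. independent of the rotation and of the extension of $\Fb$ off the surface, I will write it coordinate-free. By Stokes' theorem, for any surface patch $S$ with boundary curve $\de S$ and tangent vector $\bm t$,
\[
\int_S \n\cdot\curl\Fb\,dS=\oint_{\de S}\Fb\cdot\bm t\,dl .
\]
Since $\Fb\cdot\bm t=-\Fb_\tau\cdot(\n\times\bm t)$ and $\n\times\bm t$ is the in-surface conormal (up to orientation), the right-hand side equals $-\oint_{\de S}\Fb_\tau\cdot\bm\nu\,dl$. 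Hence $(\curl\Fb)_\nu=-\div_\tau\Fb_\tau$, which depends only on the tangential trace and its tangential derivatives.

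\emph{Weak form.} Take a closed surface $\Sigma$ ($\Gamma$ or $\de\Om$) and a smooth test function $\varphi$ on $\Sigma$. Integrating by parts on $\Sigma$ gives
\[
\int_\Sigma \varphi\,\n\cdot\curl\Fb\,dS=-\int_\Sigma \nabla_\Sigma\varphi\cdot(\n\times\Fb)\,dS .
\]
This suggests defining $\div_\tau\Fb_\tau\in H^{-1/2}(\Sigma)$ by duality, $\langle\div_\tau\Fb_\tau,\varphi\rangle=-\int_\Sigma\Fb_\tau\cdot\nabla_\Sigma\varphi\,dS$ for $\varphi\in H^{1/2}(\Sigma)$. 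For Lipschitz $\Sigma$ I will avoid surface calculus altogether. Extend $\varphi$ to $\Phi\in H^1(\Om_\pm)$ using a bounded right inverse of the trace. Then define both sides through Green's formulas in the adjacent domain $D=\Om_\pm$. For the left side,
\[
\langle(\curl\Fb)_\nu,\varphi\rangle=\int_D\bigl(\curl\Fb\cdot\nabla\Phi+\Phi\,\div\curl\Fb\bigr)\,dx .
\]
Here $\div\curl\Fb=0$ holds distributionally, and the formula is meaningful because $\curl\Fb\in H(\div,D)$. For the right side, $\langle\div_\tau\Fb_\tau,\varphi\rangle:=\int_D \Fb\cdot\curl\nabla\Phi\ldots$. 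That expression degenerates, so instead I will use the Green formula for curl, $\int_D(\curl\Fb\cdot\bm G-\Fb\cdot\curl\bm G)\,dx=\int_{\de D}(\n\times\Fb)\cdot\bm G\,dS$, with $\bm G=\nabla\Phi$.

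\emph{Extension to $H^1$.} For $\Fb$ smooth up to the boundary, both pairings equal $-\int_\Sigma\Fb_\tau\cdot\nabla\Phi\,dS$. Both sides depend continuously on $\Fb\in H^1(D)$ as functionals on $H^{1/2}(\Sigma)$: the left via $\curl\Fb\in L^2$ and the bound $\|\nabla\Phi\|_{L^2}\le C\|\varphi\|_{H^{1/2}}$, the right via the trace $\Fb_\tau\in H^{1/2}\subset L^2$ together with the same Green formula. Smooth functions are dense in $H^1(D)$ for Lipschitz $D$, so the identity passes to the limit in $H^{-1/2}(\Sigma)$. On $\Gamma$ this is done separately from $\Om_+$ and from $\Om_-$, and for the traces $t_\pm$.

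\emph{Main obstacle.} The delicate step is giving $\div_\tau$ a meaning on a merely Lipschitz surface, where tangential gradients of $H^{1/2}$ functions are not classical. I would handle this by taking the duality/Green-formula definition above as the definition of $\div_\tau\Fb_\tau$ in $H^{-1/2}$, so that the lemma becomes the continuity-plus-density argument.
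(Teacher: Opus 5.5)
Your proposal is correct and follows the paper's route. You do the computation in coordinates with the $x_3$-axis along $\n$, where your Stokes argument supplies the coordinate-independence that the paper only asserts. You then treat the $H^1$ case by approximation with smooth fields, as indicated in Remark~\ref{re2}; your Green formula $\int_D\curl\Fb\cdot\nabla\Phi\,dx=\int_{\de D}(\n\times\Fb)\cdot\nabla\Phi\,dS$, tested on traces of smooth $\Phi$ and extended by density in $H^{1/2}$, makes that step precise.

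One sign slip needs fixing. That formula gives $\int_\Sigma\varphi\,\n\cdot\curl\Fb\,dS=+\int_\Sigma\nabla_\Sigma\varphi\cdot(\n\times\Fb)\,dS$, so $\langle(\curl\Fb)_\nu,\varphi\rangle$ and $\langle-\div_\tau\Fb_\tau,\varphi\rangle$ both equal $+\int_\Sigma\Fb_\tau\cdot\nabla_\Sigma\varphi\,dS$. With the minus sign you displayed, your own definition of $\div_\tau$ would give $(\curl\Fb)_\nu=+\div_\tau\Fb_\tau$, the opposite of the lemma.
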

\begin{remark}\label{re2}
The derivatives of $\Fb \in H^1(\Om\setminus\Gamma) $ may not have the trace on $\Gamma$ or $\de\Om$, but the normal component of $\curl \Fb$ has the trace. It can be defined using \rf{l3.1} by taking $\Fb_\tau \in H^{1/2}$ at the boundary and applying $-\div_\tau$. Alternatively, one can use 
an approximation of $\Fb$ by smooth vector fields followed by passing to the limit in \rf{l3.1}. 
\end{remark}
\begin{theorem} \label{t1}
Let $\ep = \ep (\x)$ and $\mu = \mu (\x)$ be either complex-valued functions with a positive real part or real-valued positive-definite matrix functions, and let \rf{w1i} hold. Then the boundary value problem \rf{L1}-\rf{Lc} is elliptic.
\end{theorem}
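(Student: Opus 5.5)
Ellipticity of the BVP \rf{L1}--\rf{Lc} splits into two checks. First, the system \rf{L1}, \rf{L2} must be elliptic in the interior (Petrovsky/ADN sense). Second, the boundary conditions on $\de\Om$ and the transmission conditions on $\Gamma$ must satisfy the Shapiro--Lopatinsky (covering) condition. Both are checked on principal symbols with coefficients frozen at a point. The lower-order terms ($\om\B\u$ and the derivatives of $\ep,\mu$ produced by $-\div(\ep\Eb)$ and $-\div(\mu\Hb)$, which are bounded by \rf{w1i}) do not enter. The system is first order in all unknowns, so all orders are $1$ and the boundary operators have order $0$.

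\textbf{Interior ellipticity.} With $\ep,\mu$ frozen, the principal symbol of $\L_{\ep,\mu}$ at $\xi\neq0$ is the $4\times4$ matrix
\[
\sigma(\xi)=\I\begin{pmatrix} \xi\times\cdot & \ep\xi \\ -(\mu\xi)^{\T} & 0\end{pmatrix}.
\]
Suppose $\sigma(\xi)(\Eb,\alpha)=0$. Then $\xi\times\Eb+\alpha\ep\xi=0$ and $\xi\cdot\mu\Eb=0$. Dotting the first equation with $\bar{\xi}=\xi$ (real) kills the cross product and gives $\alpha\,\xi\cdot\ep\xi=0$. Since $\mathrm{Re}\,\ep>0$, we get $\xi\cdot\ep\xi\neq0$, hence $\alpha=0$. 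Then $\xi\times\Eb=0$, so $\Eb=c\xi$, and $\xi\cdot\mu\Eb=c\,\xi\cdot\mu\xi=0$ forces $c=0$. The $(\Hb,\beta)$ block is identical with $\ep,\mu$ swapped, so the $8\times8$ symbol is invertible. An alternative route: the determinant is $\pm|\xi|^2(\xi\cdot\ep\xi)(\xi\cdot\mu\xi)$.

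\textbf{Lopatinsky condition on $\de\Om$.} Flatten at $\x_0$ with $x_3$ along $\n$, Fourier transform in $(x_1,x_2)\to\xi'\neq0$, and study the ODE system $\sigma(\xi',-\I\de_3)\u=0$ on the half-line with decaying solutions. The system decouples into the $(\Eb,\alpha)$ block and the $(\Hb,\beta)$ block. The conditions also decouple: $\Eb_\tau$ together with $\beta$ goes with\dots{} but the correct pairing has to be sorted out, since $\beta$ appears in the $\Hb$-equation. The plan is to eliminate: applying the symbol adjugate shows each component solves a second-order equation with roots $\pm\I|\xi'|$ and the roots of $\xi\cdot\ep\xi=0$, $\xi\cdot\mu\xi=0$.

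In the real positive-definite case I would first reduce to $\ep=\mu=I$ by a linear change of variables and check that the conditions transform correctly. Then the decaying solutions are $e^{-|\xi'|x_3}$ times a vector in a 2-dimensional subspace for each block, and there are two conditions per block. I must show that a decaying solution with $\Eb_\tau=0$, $\n\cdot\mu\Hb=0$, $\beta=0$ at $x_3=0$ vanishes. The argument goes through curl/div relations on symbols, for instance the identity $\n\cdot\curl\Eb=-\div_\tau\Eb_\tau$ from Lemma \ref{L00}. From the first equation, $\Eb_\tau=0$ gives $\n\cdot\mu\nabla\alpha$ related to $\n\cdot\mu\Hb$. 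I would then use an energy (integration by parts in $x_3$) identity to show that the pair $(\Eb,\alpha)$, a solution of $\curl\Eb+\nabla\alpha=0$, $\div\Eb=0$ with $\Eb_\tau=0$ and $\de_3\alpha=0$, vanishes. The complex scalar case is handled by a direct computation of the $2\times2$ Lopatinsky determinant, which is nonzero because $\mathrm{Re}\,\ep,\mathrm{Re}\,\mu>0$ keeps the roots off the real axis and distinct from the conjugate branch.

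\textbf{Transmission conditions on $\Gamma$.} Using the standard trick, reflect $\Om_-$ across the flattened $\Gamma$. This turns the transmission problem into a BVP on a half-space for the doubled system with $16$ unknowns and the $8$ conditions \rf{La}, \rf{Lb}. The decaying solutions on each side form 4-dimensional spaces per field block. With zero data one must show uniqueness: jumps of $\Eb_\tau$, $\n\cdot\ep\Eb$, $\beta$, $\Hb_\tau$, $\n\cdot\mu\Hb$, $\alpha$ all vanish. I would combine the divergence-free condition and the curl relations, then integrate $\bar{\Eb}\cdot(\ldots)$ over both half-lines and add, so that boundary terms cancel through the jump conditions. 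This yields a positive quadratic form forcing zero.

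This transmission step is the main obstacle. The $\ep_\pm,\mu_\pm$ differ on the two sides, so no single coordinate change normalizes both, and the coupling between $\alpha$ and $\Hb$ (and between $\beta$ and $\Eb$) across $\Gamma$ must be tracked carefully. If the energy argument fails in the complex case, I would fall back on explicit computation of the $8\times8$ Lopatinsky determinant in the isotropic case.
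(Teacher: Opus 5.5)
Your interior step is correct: the kernel argument works, and you rightly keep the frozen $\ep$ inside the divergence symbol. One side remark is off: the $4\times4$ block determinant is $(\xi\cdot\ep\xi)(\xi\cdot\mu\xi)$, of degree four, not $|\xi|^2(\xi\cdot\ep\xi)(\xi\cdot\mu\xi)$.

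The genuine gap is the Shapiro--Lopatinsky condition. You only outline it, and several parts of the outline would not work.

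\begin{itemize}
\item \emph{There is no coupling to track.} In the model problem the terms $\I\om\mu\Hb$ and $-\I\om\ep\Eb$ are dropped, so $\alpha$ is not coupled to $\Hb$, nor $\beta$ to $\Eb$. On $\Gamma$ the conditions split as $\llbracket\Eb_\tau\rrbracket,\llbracket\n\cdot\ep\Eb\rrbracket,\llbracket\alpha\rrbracket$ for $(\Eb,\alpha)$ and $\llbracket\Hb_\tau\rrbracket,\llbracket\n\cdot\mu\Hb\rrbracket,\llbracket\beta\rrbracket$ for $(\Hb,\beta)$. On $\de\Om$ they split as $\Eb_\tau$ versus $\n\cdot\mu\Hb,\beta$.
\item \emph{A dropped term is used.} Your step ``$\Eb_\tau=0$ gives $\n\cdot\mu\nabla\alpha$ related to $\n\cdot\mu\Hb$'' relies on exactly a lower-order term that is absent from the model problem.
\item \emph{The normalization is unavailable.} A linear change of variables compatible with the $\curl$/$\div$ structure can normalize only one of the two frozen matrices, not both. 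On $\Gamma$ the two sides also carry different matrices.
\item \emph{The energy identity you sketch is not positive.} Testing against $\bar\Eb$ produces the indefinite form $\int\bar\Eb\cdot\curl\Eb$.
\item \emph{The fallbacks do not close the gap.} An explicit determinant in the isotropic case would not cover the anisotropic case of the theorem, and the complex-scalar $2\times2$ determinant is only asserted.
\end{itemize}

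The missing idea is to eliminate $\alpha$ through a scalar problem first.

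\begin{enumerate}
\item Taking $\div$ of $\curl\Eb+\mut\nabla\alpha=\z$ gives $\div(\mut\nabla\alpha)=0$ on each side.
\item By Lemma~\ref{L00}, the conormal derivative $\n\cdot\mut\nabla\alpha=-\n\cdot\curl\Eb=\div_\tau\Eb_\tau$ depends only on $\Eb_\tau$. It therefore has zero jump across $\Gamma'$ and vanishes on the flattened $\de\Om$.
\item Together with $\llbracket\alpha\rrbracket=0$, this is a scalar divergence-form transmission problem (resp.\ Neumann problem). Its only decaying solution is $\alpha=0$.
\item Then $\curl\Eb=\z$, so $\Eb=\nabla\phi$ on each side with $\phi=\E^{\I\x'\cdot\bm\sigma'}p(x_3)$ decaying.
\item Since $\bm\sigma'\neq0$, $\llbracket\Eb_\tau\rrbracket=0$ (resp.\ $\Eb_\tau=0$) gives $\llbracket\phi\rrbracket=0$ (resp.\ $\phi=0$). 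With $\div(\ept\nabla\phi)=0$ and $\llbracket\n\cdot\ept\nabla\phi\rrbracket=0$, this forces $\phi=0$.
\item For $(\Hb,\beta)$ on $\de\Om$, use the Dirichlet condition $\beta=0$ first, then the conormal condition for the potential of $\Hb$.
\end{enumerate}

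Each scalar uniqueness statement is a one-line energy identity in $x_3$: test with $\nabla\bar\alpha$, resp.\ $\nabla\bar\phi$. It holds with different piecewise-constant coefficients on the two sides, and taking real parts handles complex scalars with positive real part. So neither normalization nor determinant computations are needed.

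This is essentially the paper's route. The paper phrases the last step as ``$\Eb$ is harmonic in $\R^3$''; when $\ep_+(\x_0)\neq\ep_-(\x_0)$, it is the gradient formulation above that makes that step rigorous.
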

\begin{proof}
We need to show the ellipticity of the system of equations \rf{L1}, \rf{L2}, and the ellipticity of the boundary conditions. For the system, one needs to fix an arbitrary point $\x\in \bar \Om$, omit the lower-order terms in \rf{L1}, \rf{L2}, and show that the characteristic matrix of the obtained system is invertible when the dual variable is not at the origin. After dropping the lower-order terms, the right-hand sides in equations \rf{L1} become equal to zero, 
and the factors $\ep,\mu$ in \rf{L2} are omitted. 
Hence, system \rf{L1}, \rf{L2} is divided into two similar systems for $(\Eb,\alpha)$ and  for $(\Hb,\beta)$ with matrix operators $\left(\begin{array}{ccc}
 \curl & \eta \nabla \\[1mm]
-\div& 0
 \end{array}\right)$, where $\eta$ equals $\mu$ or $\ep$, respectively. We leave the proof of the invertibility of the corresponding characteristic matrix to the reader (the determinant of the latter matrix is $({\bm\sigma} \cdot \eta \,\bm\sigma)|\bm\sigma|^2$, where $\bm\sigma\in \mathbb R^3$ is the dual variable, $\eta = \mu$ or $\ep$.) 
 
 To show the ellipticity of the boundary conditions, one needs  \cite{Lopatinsky:1953, Shapiro:1953, ADN:59, Lions:1972, Egorov:1992} to fix an arbitrary point $\x_0$ on the boundary $\de\Om$ or $\Gamma$, introduce local coordinates in which the boundary near $\x_0$ is flat, put $\x=\x_0$ in the equations and boundary conditions, and drop the lower-order terms. The ellipticity (the Shapiro-Lopatinsky) condition holds if the obtained problem has only the 
 trivial solution in the class of functions of the form $\E^{\I \x'  \cdot {\bm\sigma}'} \v$, 
 where the vector $\x' = (x_1',x_2')$ consists of tangential variables, 
 $\bm\sigma'$ is the corresponding dual vector, the vector function  $\v$ depends only on the normal variable and
 decays exponentially at infinity.

If $\x_0 \in \Gamma$, then one needs to consider the problem in a neighborhood of $\x_0$ as a system of two problems posted on each side of $\Gamma$. In the local coordinates, these problems are reduced to the corresponding problems in two half-spaces. These two problems in the half-spaces with a common boundary $\Gamma'$ that are related by the boundary conditions must have only the trivial solutions in the class of functions decaying exponentially when the distance from $\Gamma'$ goes to infinity.

Let us prove the ellipticity of problem \rf{L1}-\rf{Lc} at a point $\x_0\in\Gamma$; the ellipticity at $\x_0\in\de\Om$ can be shown similarly. We move the origin to the point $\x_0$, rotate the coordinates $\x - \x_0 \to \xt = \M (\x - \x_0)$ to direct the $\hat{x}_3$-axis along the normal vector $\n$ at the point $\x_0$, and make the shift $\hat{x}_3-h(\hat{x}_1,\hat{x}_2)\to \hat{x}_3$ in such a way that $\Gamma$ near the origin coincides with the $2$-dimensional space $\Gamma'$ given by the equation $\hat{x}_3=0$.  Then we put $\xt=0$ in the coefficients of the equations and the boundary conditions and drop the lower-order terms. We come up with the following system of equations in $\mathbb R^3\setminus\Gamma'$. We use the same notation for the vectors $\Eb,  \Hb, \n$ in the new system of coordinates, use the notation $\x$ for $\xt$ and $\ept = \M \ep \M^{\T}$, $\mut = \M \mu \M^{\T}$.  Then $\Eb, \alpha, \Hb, \beta$ satisfy the equations 
\begin{alignat}{3}
\label{N1}
 \curl \Eb+ \mut (\x_0) \nabla \alpha &= \z,  &\quad \curl \Hb+\ept (\x_0) \nabla\beta &= 0, ~&\quad~ &\x\in\mathbb R^3\setminus\Gamma',\\[2mm]
 \label{N2}  -\div(\Eb)&=0, &-\div(\Hb)&=0,~&\quad~ &\x\in\mathbb R^3\setminus\Gamma',
\end{alignat}
and the boundary conditions
\begin{alignat}{4}
\label{Na}
\llbracket\Eb_\tau\rrbracket&=0,  &\quad ~\llbracket\n\cdot  \ept (\x_0) \Eb\rrbracket&=0, &\quad &\llbracket\alpha\rrbracket&= 0 , \quad~&\x\in \Gamma', \\[2mm]
 \label{Nb}
\llbracket\Hb_\tau\rrbracket&=0, &\quad ~ \llbracket\n\cdot\mut (\x_0)\Hb\rrbracket&=0, &\quad &\llbracket\beta\rrbracket&= 0 , \quad~&\x\in \Gamma',
\end{alignat}
Let us mention that equations \rf{N2} do not contain $\ep$ and $\mu$ because the lower-order terms are omitted from the equations. The problem \rf{N1}-\rf{Nb} is split naturally into two completely analogous problems for $(\Eb,\alpha)$ and  for $(\Hb,\beta)$. The first one has the form
\begin{align}\label{A1}
 &\curl \Eb+\mut (\x_0) \nabla \alpha = \z, ~~~\div(\Eb)=0,  \quad~ \x\in\mathbb R^3\setminus\Gamma',\\[2mm]\label{A2}
 &\llbracket\Eb_\tau\rrbracket=0,  \quad ~\llbracket\n\cdot  \ept (\x_0)\Eb\rrbracket=0, \quad \llbracket\alpha\rrbracket= 0 , \quad~\x\in \Gamma'.
\end{align}
We apply the operator $\div$ to the first equation in \rf{A1} and obtain
\begin{align}
\label{A3}
    \div \left(\mut (\x_0) \nabla \alpha \right) = 0, \quad~ \x\in\mathbb R^3\setminus\Gamma'.
\end{align}
It follows from the Lemma \rf{L00} and the first equations in \rf{A1},\rf{A2} that the normal component of $\mut (\x_0) \nabla \alpha$ is well-defined and has no discontinuity on $\Gamma'$. Together with the last condition in \rf{A2} this leads to the validity of \rf{A3} in the whole space $\R^3$. This elliptic equation has only a trivial solution in the class of bounded functions decaying when $|x_3| \to \infty$. Thus, $\alpha = 0$.

From the first two equations in \rf{A2} it follows that $\Eb$ does not have a discontinuity on $\Gamma'$ and therefore \rf{A1} with $\alpha=0$ is valid in the whole space $\R^3 $. Hence, $\Eb$ is harmonic in $\R^3 $. Thus, $\Eb = \z$ since it is bounded and decays as $|x_3| \to \infty$.
\end{proof}

\section{The relationship between Maxwell's equations and the elliptic problem}
\setcounter{equation}{0}
\label{Rel}

\begin{theorem} \label{t0} 
\leavevmode
\begin{itemize}[label={(\roman*)}]
\item[(i)]    
If  $(\Eb, \Hb)$ is a solution of \rf{Mone}-\rf{Mc} with $\Eb, \Hb \in  H^1(\Om\setminus\Gamma) $, then $(\Eb, \alpha, \Hb , \beta)$  with $\alpha=const.,~\beta=0$ is a solution of the elliptic problem \rf{L1}-\rf{Lc} with zero on the right-hand sides of \rf{La},\rf{Lb} and 
\begin{align}
\label{thm31}
    \I \om B_\nu^0=-\div_\tau\Eb_\tau^0, \quad \beta^0=0.
\end{align}

\item[(ii)]
Conversely, if $(\Eb, \alpha, \Hb, \beta)$ is a solution of \rf{L1}-\rf{Lc} with all the components belonging to the vector or scalar space $H^1(\Om\setminus\Gamma) $, with zero on the right-hand sides of \rf{La},\rf{Lb}, and \rf{thm31} holds, then  $(\Eb,\Hb)$ is a solution of the Maxwell problem \rf{Mone}-\rf{Mc}.
\end{itemize}
\end{theorem}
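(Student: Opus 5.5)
For part (i) we verify each equation and boundary condition directly. With $\alpha=$ const and $\beta=0$ we have $\nabla\alpha=\z$ and $\nabla\beta=\z$, so \rf{L1} coincides with \rf{Mone}. Taking the divergence of \rf{Mone} in $\Om_\pm$ (valid in the sense of distributions, since $\div\curl=0$) gives $\div(\ep\Eb)=0$ and $\div(\mu\Hb)=0$ when $\om\neq0$. This is \rf{L2}.

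The tangential jump conditions in \rf{La}, \rf{Lb} are exactly \rf{Ma}, and $\llbracket\alpha\rrbracket=\llbracket\beta\rrbracket=0$ holds trivially. For the normal jumps we use Lemma \ref{L00}. On each side of $\Gamma$,
\[
\I\om\,\n\cdot\ep\Eb=-\n\cdot\curl\Hb=\div_\tau\Hb_\tau ,
\]
so $\I\om\llbracket\n\cdot\ep\Eb\rrbracket=\div_\tau\llbracket\Hb_\tau\rrbracket=0$ in $H^{-1/2}(\Gamma)$. Hence $D_\nu^\Gamma=0$, and in the same way $B_\nu^\Gamma=0$. On $\de\Om$ we get $\I\om\,\n\cdot\mu\Hb=\n\cdot\curl\Eb=-\div_\tau\Eb_\tau^0$, which is \rf{thm31}, and $\beta=0$ gives $\beta^0=0$. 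The only delicate point is that the trace identities hold in $H^{-1/2}$, as allowed by Remark \ref{re2}.

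For part (ii) the task is to show that $\nabla\alpha=0$ and $\beta=0$; then \rf{L1} reduces to \rf{Mone}, and \rf{La}, \rf{Lb}, \rf{Lc} contain \rf{Ma}, \rf{Mc}. Take the divergence of the second equation in \rf{L1} and use \rf{L2}. This gives
\[
\div(\ep\nabla\beta)=-\I\om\,\div(\ep\Eb)=0 \quad\text{in } \Om_\pm ,
\]
and in the same way $\div(\mu\nabla\alpha)=0$ in $\Om_\pm$. Next we derive transmission conditions. Taking the normal component of the second equation in \rf{L1} on each side of $\Gamma$ and applying Lemma \ref{L00} gives
\[
\n\cdot\ep\nabla\beta=\div_\tau\Hb_\tau-\I\om\,\n\cdot\ep\Eb .
\]
The jumps of both terms on the right vanish, so the conormal derivative of $\beta$ has no jump, and $\llbracket\beta\rrbracket=0$ as well. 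Therefore $\beta\in H^1(\Om)$ solves $\div(\ep\nabla\beta)=0$ weakly in all of $\Om$, with $\beta=0$ on $\de\Om$. The Lax--Milgram theorem, using $\operatorname{Re}\ep>0$, then gives $\beta=0$.

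For $\alpha$ the same argument shows that $\div(\mu\nabla\alpha)=0$ weakly in $\Om$, with no jump in $\alpha$ or in its conormal derivative across $\Gamma$. On $\de\Om$, Lemma \ref{L00} and \rf{thm31} give
\[
\n\cdot\mu\nabla\alpha=-\n\cdot\curl\Eb+\I\om B_\nu^0=\div_\tau\Eb_\tau^0-\div_\tau\Eb_\tau^0=0 .
\]
So $\alpha$ solves a homogeneous Neumann problem, and the energy identity $\int_\Om\mu\nabla\alpha\cdot\overline{\nabla\alpha}=0$ together with $\operatorname{Re}\mu>0$ forces $\nabla\alpha=0$. Hence $\alpha$ is constant, which is why part (i) allows $\alpha=$ const.

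The main obstacle is making these trace and weak-formulation steps rigorous at $H^1$ regularity on Lipschitz boundaries. The normal components exist only in $H^{-1/2}$, and the conormal derivatives have to be interpreted through Green's formula. Our plan is to test with $\varphi\in H^1(\Om)$ (or $H^1_0(\Om)$ for $\beta$), integrate by parts separately in $\Om_+$ and $\Om_-$, and handle the $\curl$ terms by the identity $\int\curl\Fb\cdot\nabla\bar\varphi=\int_{\de}\n\times\Fb\cdot\nabla\bar\varphi$, justified by smooth approximation as in Remark \ref{re2}. In the matrix case we apply positivity to the real part, or to the Hermitian part of $\mu$.
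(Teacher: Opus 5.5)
Your proposal is correct and follows the paper's proof. Part (i) is direct verification via Lemma \ref{L00}. Part (ii) takes divergences to get $\div(\mu\nabla\alpha)=\div(\ep\nabla\beta)=0$ in $\Om\setminus\Gamma$, shows that $\alpha$, $\beta$ and their conormal derivatives have no jump on $\Gamma$ (with $\beta=0$ and $\de\alpha/\de\hat{\n}=0$ on $\de\Om$ via \rf{thm31}), and concludes by Green's first identity/energy argument. Your explicit remark that part (i) needs $\om\neq 0$ to recover \rf{L2} and $D_\nu^\Gamma=B_\nu^\Gamma=0$ is a point the paper leaves implicit.
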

\begin{proof} The first statement is obvious (the relation for $B_\nu^0$ follows from \rf{Mone} and Lemma \ref{L00}). Let us prove the second statement. 

Applying $\div$ to \rf{L1}, we obtain
\[
\div (\mu \nabla \alpha) = \div(\I \om \mu \Hb),~~ \quad~
  \div (\ep \nabla \beta) = - \div(\I \om \ep \Eb),~~\quad~ \x\in\Om\setminus\Gamma.
\]
This and \rf{L2} imply
\begin{align}
\label{Lap}
\div (\mu \nabla \alpha) = 
  \div (\ep \nabla \beta) = 0,~~\quad~ \x\in\Om\setminus\Gamma.
\end{align}

Note that $\llbracket\de\alpha/\de \hat{\n}\rrbracket = \llbracket(\mu \nabla\alpha)_\nu\rrbracket$,
$\x\in\Gamma$, where $\hat{\n}$ is conormal to $\Gamma$ for the operator $\div (\mu \nabla \cdot)$, is well-defined in $H^{-1/2} (\Gamma)$, see \cite{McLean:2000}.
Furthermore, from Lemma \ref{L00}, the homogeneous first relation in \rf{La} and the homogeneous second relation in  \rf{Lb} we get $\llbracket(\curl\Eb)_\nu\rrbracket=\llbracket(\I\om\mu\Hb)_\nu\rrbracket$ (both terms are zero), and therefore the first equation in \rf{L1} implies $\llbracket\de\alpha/\de \hat{\n}\rrbracket=0$.
 The same arguments using $\de\Om$ instead of $\Gamma$ lead to $\de\alpha/\de \hat{\n}=0, ~\x\in \de\Om$. 
If, in the reasoning presented above, we interchange $\Eb$ and $\Hb$, we obtain $\llbracket\de\beta/\de\hat{\n}\rrbracket=0$, where $\hat{\n}$ is the conormal for the operator $\div (\ep \nabla \cdot)$. Now, the last relations in \rf{La}-\rf{Lc}  with zero on their right-hand sides allow us to apply Green's first identity to the solutions of equations \rf{Lap}; as a result, we obtain that $\alpha$ is a constant and $\beta=0$. Consequently, the pair $(\Eb,\Hb)$ satisfies the equations \rf{Mone}-\rf{Mc}.
 \end{proof}

The proof of Theorem \ref{t0} can be applied without any changes to verify the following statement.

\begin{theorem} \label{T2} 
\leavevmode
\begin{itemize}[label={(\roman*)}]
\item[(i)]    
If  $(\Eb, \Hb)$ is a solution of \rf{Mone}-\rf{Mc} with $\Eb, \Hb \in  H^1(\Om\setminus\Gamma) $ and nonhomogeneous right-hand sides  $\Eb_\tau^\Gamma,~ \Hb_\tau^\Gamma\in H^{1/2}(\Gamma)$ in  \rf{Ma}, then $(\Eb, \alpha, \Hb, \beta)$  with $\alpha=const.,~\beta=0$ is a solution of the elliptic problem \rf{L1}-\rf{Lc} with the following relations between the boundary data:
\begin{alignat}{2}
\label{hom}
\I \om B_\nu^\Gamma & =-\div_\tau\Eb_\tau^\Gamma,~&&\quad \I \om D_\nu^\Gamma=\div_\tau\Hb_\tau^\Gamma, \\[2mm]
\I \om B_\nu^0 &=-\div_\tau\Eb_\tau^0,~&&\quad \beta^\Gamma =\alpha^\Gamma=\beta^0=0.
\label{home}
\end{alignat}
\item[(ii)]
Conversely, if $(\Eb, \alpha,\Hb, \beta)$ is a solution of \rf{L1}-\rf{Lc} 
with all components in the vector or scalar space $H^1(\Om\setminus\Gamma) $ and relations \rf{hom},\rf{home} are satisfied, then  $(\Eb,\Hb)$ is a solution of the Maxwell problem \rf{Mone}-\rf{Mc} with nonhomogeneous right-hand sides $\Eb_\tau^\Gamma,~ \Hb_\tau^\Gamma$ in  \rf{Ma}.
\end{itemize}
\end{theorem}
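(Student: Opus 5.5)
Part (i) is a direct verification, done in the same way as in Theorem \ref{t0}. Let $(\Eb,\Hb)\in H^1(\Om\setminus\Gamma)$ solve \rf{Mone}--\rf{Mc} with jump data $\Eb_\tau^\Gamma,\Hb_\tau^\Gamma$ in \rf{Ma}. Put $\alpha=const.$ and $\beta=0$. Then \rf{L1} coincides with \rf{Mone}. Equations \rf{L2} follow by taking the divergence of \rf{Mone}, since $\div\curl=0$ in each $\Om_\pm$. The jump conditions $\llbracket\alpha\rrbracket=\llbracket\beta\rrbracket=0$ and the condition $\beta=0$ on $\de\Om$ hold trivially. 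For the normal jumps, Lemma \ref{L00} is applied on each side of $\Gamma$ in $H^{-1/2}(\Gamma)$. Subtracting, the first equation of \rf{Mone} gives
\[
\I\om\llbracket\n\cdot\mu\Hb\rrbracket=\llbracket(\curl\Eb)_\nu\rrbracket=-\div_\tau\Eb_\tau^\Gamma,
\]
and the second gives $-\I\om\llbracket\n\cdot\ep\Eb\rrbracket=-\div_\tau\Hb_\tau^\Gamma$. These are exactly the relations \rf{hom}. The same computation on $\de\Om$ gives \rf{home} for $B_\nu^0$.

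For part (ii), I follow the proof of Theorem \ref{t0}. Taking the divergence of \rf{L1} and using \rf{L2} gives \rf{Lap} in $\Om\setminus\Gamma$. Next, the normal component of the first equation of \rf{L1} is taken on each side of $\Gamma$ and the jump is formed. Lemma \ref{L00} together with the first relation of \rf{La} gives $\llbracket(\curl\Eb)_\nu\rrbracket=-\div_\tau\Eb_\tau^\Gamma$. The second relation of \rf{Lb} gives $\I\om\llbracket(\mu\Hb)_\nu\rrbracket=\I\om B_\nu^\Gamma$. By \rf{hom} these two quantities are equal, so $\llbracket\de\alpha/\de\hat\n\rrbracket=\llbracket(\mu\nabla\alpha)_\nu\rrbracket=0$ in $H^{-1/2}(\Gamma)$. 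The analogous argument on $\de\Om$, using \rf{Lc} and \rf{home}, gives $\de\alpha/\de\hat\n=0$ there. Interchanging the roles of $\Eb$ and $\Hb$ and using $\I\om D_\nu^\Gamma=\div_\tau\Hb_\tau^\Gamma$ gives $\llbracket\de\beta/\de\hat\n\rrbracket=0$ on $\Gamma$. No normal-derivative condition for $\beta$ is needed on $\de\Om$, because there $\beta=\beta^0=0$.

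With these relations and $\alpha^\Gamma=\beta^\Gamma=\beta^0=0$, I apply Green's first identity in $\Om_+$ and $\Om_-$ separately and add. The interface terms cancel, since both the functions and their conormal derivatives are continuous across $\Gamma$. This yields $\int_\Om \nabla\bar\alpha\cdot\mu\nabla\alpha\,d\x=0$, and similarly for $\beta$ with $\ep$. Taking real parts and using that $\mu$ and $\ep$ have positive-definite real parts gives $\nabla\alpha=0$ and $\nabla\beta=0$. Hence $\alpha$ is constant on $\Om$ (it has no jump), and $\beta=0$ because of its boundary condition. Substituting into \rf{L1} shows that $(\Eb,\Hb)$ satisfies \rf{Mone}, with \rf{Ma} holding with data $\Eb_\tau^\Gamma,\Hb_\tau^\Gamma$ and \rf{Mc} holding.

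The only delicate step is the justification of the interface computations in $H^{-1/2}$. The conormal derivatives exist in the weak sense by \cite{McLean:2000}, since $\div(\mu\nabla\alpha)=0$. Lemma \ref{L00} applies to $H^1$ fields on each side of $\Gamma$. The Green identity is then understood as the weak formulation, pairing $H^{1/2}$ traces with $H^{-1/2}$ conormal derivatives; this is standard, just as in Theorem \ref{t0}.
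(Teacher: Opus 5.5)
Your proposal is correct and follows the paper's route exactly: the paper only asserts that the proof of Theorem~\ref{t0} carries over unchanged, and you carry it over in detail. The relations \rf{hom}, \rf{home} are used precisely to cancel the nonzero tangential jump data in the normal-component computations, so that the conormal derivatives of $\alpha,\beta$ have zero jump on $\Gamma$ (and that of $\alpha$ vanishes on $\de\Om$) before Green's identity forces $\alpha=const.$, $\beta=0$; as in the paper, part (i) implicitly uses $\om\neq 0$ to derive \rf{L2} from \rf{Mone}.
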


 \section{The general nonhomogeneous transmission problem}
 \label{sec4}
 \setcounter{equation}{0}
 
 Consider now the general nonhomogeneous Maxwell's equations
 \begin{align}
\label{M1a}
 \curl \Eb &= \I \om \mu \Hb+\K,~~ \quad
  \curl \Hb = - \I \om \ep \Eb+\bm J,~~\quad \x\in\Om\setminus\Gamma,
\end{align}
with nonhomogeneous boundary conditions on $\Gamma$ and $\de\Om$:
\begin{align}
\label{Maa}
\llbracket\Eb_\tau\rrbracket=&\Eb_\tau^\Gamma, \quad \llbracket\Hb_\tau\rrbracket=\Hb_\tau^\Gamma, \quad~\x\in \Gamma,  \\[2mm]
 \Eb_\tau=&\Eb_\tau^0, \quad~ \x\in \de\Om,
 \label{Mca}
\end{align}
and the corresponding extended problem
 \begin{alignat}{3}
\label{M1ae}
 \curl \Eb+\mu \nabla\alpha &= \I \om \mu \Hb+\K,~~ &\quad \curl \Hb+\ep\nabla\beta &= - \I \om \ep \Eb+\bm J,~~\quad &\x\in\Om\setminus\Gamma,\\[2mm]\label{L2ae}   -\div(\ep\Eb)&=j, &\quad -\div(\mu\Hb)=& k,~\quad~ &\x\in\Om\setminus\Gamma,
\end{alignat}
with nonhomogeneous boundary conditions \rf{La}-\rf{Lc}:   
\begin{alignat}{4}
\label{Lae}
 \llbracket\Eb_\tau\rrbracket&=\Eb_\tau^\Gamma, \quad ~ &\llbracket\n\cdot \ep \Eb\rrbracket&=D_\nu^\Gamma, \quad &~\llbracket\beta\rrbracket&= \beta^\Gamma , &\quad~&\x\in \Gamma, \\[2mm]
 \label{Lbe}
\llbracket\Hb_\tau\rrbracket&=\Hb_\tau^\Gamma, \quad ~ &\llbracket\n\cdot \mu \Hb\rrbracket&=B_\nu^\Gamma, \quad & ~\llbracket\alpha\rrbracket &= \alpha^\Gamma , &\quad~&\x\in \Gamma, \\[2mm]
 \Eb_\tau&=\Eb_\tau^0, \quad &~\n\cdot \mu \Hb&=B_\nu^0, \quad~  &\beta&=\beta^0, &\quad~ &\x\in \de\Om.
 \label{Lce}
\end{alignat}

We assume that the components of the solutions $\Eb, \alpha, \Hb,\beta$ 
belong to the vector or scalar space $ H^1(\Om\setminus\Gamma)$, the boundary functions in \rf{Maa},\rf{Mca}, \rf{Lae}-\rf{Lce} belong to the Sobolev space $H^{1/2}$ on $\Gamma$ or $\de\Om$, and 
 \begin{align}
\label{l2g}
 \K,\bm J,k,j\in L_2(\Om\setminus\Gamma).
\end{align} 
We do not use the space $L_2(\Om)$ in \rf{l2g} to emphasize that the operators on the left-hand side of equations
\rf{M1a}, \rf{M1ae}, \rf{L2ae} are applied in the domain $\Om\setminus\Gamma$ and the jumps of the solutions across $\Gamma$ do not affect the functions $\K,\bm J,k,j$. See also Remark \ref{R1} regarding the number of boundary conditions in the problem \rf{M1ae}-\rf{Lce}.

The following statement contains nothing new compared to Theorem \ref{t1}. We repeat this statement (in a slightly different context) because it is brief and important:
\begin{theorem} \label{t1e}
The boundary value problem \rf{M1ae}-\rf{Lce} is elliptic.
\end{theorem}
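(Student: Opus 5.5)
The plan is to reduce Theorem \ref{t1e} directly to Theorem \ref{t1}. Ellipticity of a boundary value problem depends only on the principal parts of the differential operators in the equations and of the boundary operators. It also uses the assignment of orders (Douglis–Nirenberg weights) to unknowns, equations and boundary conditions. It does not depend on the right-hand sides or on lower-order terms. So I would first observe that the left-hand sides of \rf{M1ae}--\rf{Lce} coincide with those of \rf{L1}--\rf{Lc}. The only differences are the inhomogeneities $\K,\bm J,j,k$ in the equations and the data on $\Gamma$ and $\de\Om$. Hence the operator of the problem \rf{M1ae}--\rf{Lce} is the same operator as in \rf{L1}--\rf{Lc}, acting from $H^1(\Om\setminus\Gamma)$ into $L_2(\Om\setminus\Gamma)\times H^{1/2}$, and the two ellipticity conditions are literally the same.

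For completeness I would still recall the two checks. First, freeze $\x\in\bar\Om$ and drop the terms $\I\om\mu\Hb$, $-\I\om\ep\Eb$ and the derivatives falling on $\ep,\mu$ in \rf{L2ae}; these terms are of lower order thanks to \rf{w1i}. The system then splits into two blocks with symbol of $\left(\begin{array}{cc}\curl & \eta\nabla\\ -\div & 0\end{array}\right)$, whose determinant $(\bm\sigma\cdot\eta\bm\sigma)|\bm\sigma|^2$ does not vanish for $\bm\sigma\neq\z$. This uses the positivity of the real part of $\eta$. Second, at boundary points of $\Gamma$ and $\de\Om$, flatten the boundary and freeze the coefficients. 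The resulting homogeneous model problems are exactly \rf{N1}--\rf{Nb} on $\Gamma$, and their analogue on $\de\Om$. The Shapiro--Lopatinsky argument of Theorem \ref{t1} then applies: take the divergence to get an elliptic equation for $\alpha$ (resp.\ $\beta$) across $\Gamma'$, conclude $\alpha=0$, and then $\Eb$ is harmonic and decaying, so $\Eb=\z$.

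The main point needing care, though it is not really an obstacle, is the boundary $\de\Om$, which Theorem \ref{t1} treated only by the word ``similarly''. In the half-space model with conditions $\Eb_\tau=0$, $\n\cdot\mut\Hb=0$, $\beta=0$ I would argue as follows. For the $(\Eb,\alpha)$ block, $\div(\mut\nabla\alpha)=0$ holds in the half-space. Lemma \ref{L00} gives $(\curl\Eb)_\nu=-\div_\tau\Eb_\tau=0$, so the conormal derivative of $\alpha$ vanishes. The Neumann problem with exponential decay then gives $\alpha=0$. It follows that $\Eb$ is curl- and divergence-free with $\Eb_\tau=0$. Each component is harmonic, $E_1=E_2=0$ on the boundary, and $\de_3E_3=-\de_1E_1-\de_2E_2=0$ there, so $\Eb=\z$.

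For the $(\Hb,\beta)$ block, $\beta$ satisfies a Dirichlet problem, so $\beta=0$. Then $\Hb$ is harmonic and its normal component vanishes. Tangential decaying solutions of the form $\E^{\I\x'\cdot\bm\sigma'}\v(x_3)$ must be multiples of $\E^{-|\bm\sigma'|x_3}$, and $\curl\Hb=\z$ with $H_3=0$ forces $\de_3H_1=\de_3H_2=0$, hence $\Hb=\z$. Since none of this involves the right-hand sides, Theorem \ref{t1e} follows.
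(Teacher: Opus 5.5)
Your proposal is correct and follows the paper's argument: the paper gives no separate proof of Theorem~\ref{t1e}, only the remark that it contains nothing new compared to Theorem~\ref{t1}, because \rf{M1ae}--\rf{Lce} involve the same operator as \rf{L1}--\rf{Lc} and differ only in the data. Your explicit half-space check at $\de\Om$ correctly fills in the paper's ``similarly'' for scalar $\ep,\mu$; for matrix-valued $\mu$, replace ``$H_3=0$'' by $\n\cdot\mut\Hb=0$, which still rules out the only decaying curl- and divergence-free candidate, a multiple of $\nabla\E^{\I\x'\cdot\bm\sigma'-|\bm\sigma'||x_3|}$, because $\mut_{33}>0$.
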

\begin{remark} In the case of the elliptic problem  \rf{M1ae}-\rf{Lce}, the choice of spaces for the solutions, for their values on the boundary, and for the inhomogeneities in the equations is entirely natural and standard for elliptic problems. These spaces must be chosen differently for the nonhomogeneous Maxwell problem \rf{M1a}-\rf{Mca}. 
\end{remark}
We still look for solutions to Maxwell's problem in the space $ H^1(\Om\setminus\Gamma)$ and assume that the functions in \rf{Maa},\rf{Mca} belong to $H^{1/2}$. However, we must impose additional constraints on  $\K,~\bm J \in L_2(\Om)$ in \rf{M1a} since the inclusion of $\Eb, \Hb \in H^1$ implies additional smoothness of $\K$ and $\bm J$.
\begin{lemma}\label{l22}
\leavevmode
\begin{itemize}[label={(\roman*)}]
\item[(i)]
If \rf{M1a} holds with $\K,~\bm J\in L_2(\Om)$ and $ ~ \Eb ,~ \Hb\in H^1(\Om\setminus\Gamma)$, then
the functions $ \div \K,~ \div \bm J$ belong to the space $L_2(\Om\setminus\Gamma)$ and
\begin{align}
\label{div1}
\|\div \K\|_{L_2(\Om\setminus\Gamma)}+ \|\div \bm J\|_{L_2(\Om\setminus\Gamma)}\leq C\left(\|\Eb\|_{H^1(\Om\setminus\Gamma)}+\|\Hb\|_{H^1(\Om\setminus\Gamma)}\right),
\end{align}
\item[(ii)] The traces $ K^0_\nu:= K_\nu|_{\de\Om}, J^0_\nu:= J_\nu|_{\de\Om}$ of the scalar values of the normal components of  $\K$ and $\bm J$ on $\de\Om$
and the jumps $ K^\Gamma_\nu:=\llbracket K_\nu\rrbracket, ~  J^\Gamma_\nu:=\llbracket J_\nu\rrbracket$ of the corresponding values on $\Gamma$ 
are well-defined as elements of the Sobolev spaces $H^{-1/2}$ on $\de\Om$ and $\Gamma$, respectively. The norms of all these traces and jumps can be estimated by $\|\Eb\|_{H^1(\Om\setminus\Gamma)}+\|\Hb\|_{H^1(\Om\setminus\Gamma)}$.
\end{itemize}
\end{lemma}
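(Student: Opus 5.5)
Part (i) follows by taking the divergence of \rf{M1a} in each of $\Om_\pm$ separately. In $\Om_\pm$ we have $\K=\curl\Eb-\I\om\mu\Hb$ and $\bm J=\curl\Hb+\I\om\ep\Eb$. Since $\div\curl=0$ in the distributional sense in each open set $\Om_\pm$,
\begin{align*}
\div\K=-\I\om\,\div(\mu\Hb),\qquad \div\bm J=\I\om\,\div(\ep\Eb),\qquad \x\in\Om_\pm .
\end{align*}
By \rf{w1i}, $\ep_\pm,\mu_\pm$ are multipliers on $H^1(\Om_\pm)$, so $\mu\Hb,\ep\Eb\in H^1(\Om_\pm)$. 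Their norms are bounded by $C\|\Hb\|_{H^1(\Om_\pm)}$ and $C\|\Eb\|_{H^1(\Om_\pm)}$ respectively, with $C$ depending on the $W^{1,\infty}$ norms of the coefficients. Hence $\div(\mu\Hb)$ and $\div(\ep\Eb)$ lie in $L_2(\Om_\pm)$ and satisfy the bounds needed for \rf{div1}. Note that $\div\K$ is understood only in $\Om\setminus\Gamma$; no claim is made about its behaviour as a distribution across $\Gamma$.

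For part (ii) I would compute the normal traces directly rather than use abstract $H(\div)$ trace theory, although both work. On $\de\Om$ and on each side of $\Gamma$ we have
\begin{align*}
K_\nu=(\curl\Eb)_\nu-\I\om\,\n\cdot\mu\Hb,\qquad J_\nu=(\curl\Hb)_\nu+\I\om\,\n\cdot\ep\Eb .
\end{align*}
By Lemma \ref{L00} and Remark \ref{re2}, $(\curl\Eb)_\nu=-\div_\tau\Eb_\tau$. Here $\Eb_\tau\in H^{1/2}$ by the trace theorem, and $\div_\tau$ maps $H^{1/2}$ boundedly into $H^{-1/2}$ on a Lipschitz surface. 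The terms $\n\cdot\mu\Hb$ and $\n\cdot\ep\Eb$ are traces of $H^1(\Om_\pm)$ functions multiplied by the $L_\infty$ normal. They therefore lie in $L_2\subset H^{-1/2}$; whether they lie in $H^{1/2}$ is unclear for merely Lipschitz $\n$, but it is not needed. This gives $K_\nu^0,J_\nu^0$ on $\de\Om$, and the one-sided traces $K_\nu^\pm,J_\nu^\pm$ on $\Gamma$. Their differences define the jumps $K_\nu^\Gamma,J_\nu^\Gamma$. Each term is bounded by the $H^1(\Om\setminus\Gamma)$ norms of $\Eb,\Hb$, which yields the stated estimates.

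The main obstacle is making "the trace of $K_\nu$" meaningful, since $\K$ is only in $L_2$. I would resolve it in one of two ways. The first is simply to define the trace by the formula above, noting that the definition is independent of the representation. The second is to observe that $\K\in H(\div,\Om_\pm)$ by part (i), so the normal trace exists in $H^{-1/2}$ via Green's formula with $H^1$ test functions, as in \cite{McLean:2000}. Checking that this agrees with the formula reduces to verifying Lemma \ref{L00} in its weak form, i.e.\ $\langle(\curl\Fb)_\nu,\varphi\rangle=\int\curl\Fb\cdot\nabla\varphi$. This is done by approximating $\Fb$ with smooth fields, as indicated in Remark \ref{re2}.
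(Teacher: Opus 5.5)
Your proposal is correct and follows the paper's proof. For (i), you take $\div$ of \rf{M1a} on $\Om_\pm$ and use the $W^{1,\infty}$ multiplier property to get $\ep\Eb,\mu\Hb\in H^1(\Om\setminus\Gamma)$; for (ii), you combine \rf{M1a} with Lemma \ref{L00}.

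One small caveat: on a merely Lipschitz surface, $\n\times\Eb$ need not lie in $H^{1/2}$, for the same reason you flagged for $\n\cdot\mu\Hb$. So your second route is the cleaner justification: the $H(\div,\Om_\pm)$ normal trace, together with $\|\K\|_{L_2}+\|\div\K\|_{L_2}\leq C(\|\Eb\|_{H^1(\Om\setminus\Gamma)}+\|\Hb\|_{H^1(\Om\setminus\Gamma)})$.
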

\begin{remark}
The notation $L_2(\Om\setminus\Gamma)$ is used in \rf{div1} instead of $L_2(\Om)$ for the same reason as in \rf{l2g}: to emphasize that the operator $\div$ is applied to $\K, \bm J$ in the domain $\Om\setminus\Gamma$, rather than in $\Om$. In particular, jumps of $\K,\bm J$ on $\Gamma$ do not affect the values of $\div \K$ and  $\div \bm J$. 
\end{remark}
\begin{proof}
We apply the operator $\div$ to both relations \rf{M1a}. Their left-hand sides vanish since $\div\curl=0$. In order to prove \rf{div1}, it only remains to show  that 
\begin{align}
\label{div1a}
\ep\Eb ,~ \mu\Hb\in H^1(\Om\setminus\Gamma).
\end{align}
The inclusions above follow from the condition $\Eb, \Hb \in H^1$ and the assumption  $\ep,~\mu\in W^{1,\infty}(\Om\setminus\Gamma)$ since
$W^{1,\infty}$ is the multiplier algebra of the Sobolev space $H^1$ in a bounded domain.

The second statement of Lemma \ref{l22} follows immediately from \rf{M1a}, \rf{div1a} and Lemma \ref{L00}.
\end{proof}
 
\begin{theorem} \label{t5}
\leavevmode
\begin{itemize}[label={(\roman*)}]
\item[(i)]
Let $(\Eb,\Hb)$ be a solution of the transmission problem \rf{M1a}-\rf{Mca} for Maxwell's equations with $\Eb,\Hb\in H^1(\Om\setminus\Gamma)$. Then $\K, \bm {J}$ have properties described in Lemma \ref{l22}, and the following relations hold for the boundary values of the solutions
\begin{align}
\label{nhom}
K_\nu^\Gamma+\I \om B_\nu^\Gamma=-\div_\tau\Eb_\tau^\Gamma,~\quad 
J_\nu^\Gamma+ \I \om D_\nu^\Gamma=\div_\tau\Hb_\tau^\Gamma,~\quad K_\nu^0+\I \om B_\nu^0=-\div_\tau\Eb_\tau^0,
\end {align}
and the vector $(\Eb,\alpha,\Hb, \beta)$ with $\alpha=const.,~\beta=0$ is a solution of the elliptic problem \rf{M1ae}-\rf{Lce} for which \rf{nhom} holds and \begin{align}
\label{nhom1}
\I\om k= \div\K, \quad \I\om j=-\div\bm J,\quad  ~~ \beta^\Gamma=\alpha^\Gamma=\beta^0=0.
\end{align}
\item[(ii)] 
Conversely, if $(\Eb, \alpha, \Hb, \beta)$ is  a solution of \rf{M1ae}-\rf{Lce} with all components belonging to the vector or scalar space $H^1(\Om\setminus\Gamma) $, functions $\K, \bm J$ satisfy the properties stated in Lemma \ref{l22} and relations \rf{nhom}, \rf{nhom1} hold, then  $(\Eb,\Hb)$ is a solution of the Maxwell problem \rf{M1a}-\rf{Mca}.
\end{itemize}
\end{theorem}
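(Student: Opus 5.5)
The argument follows the proof of Theorem \ref{t0}, with the source terms carried along.

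\emph{Part (i).} Let $(\Eb,\Hb)\in H^1(\Om\setminus\Gamma)$ solve \rf{M1a}--\rf{Mca}. Lemma \ref{l22} gives the properties of $\K,\bm J$, namely $\div\K,\div\bm J\in L_2(\Om\setminus\Gamma)$ and the normal traces and jumps in $H^{-1/2}$.

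To get \rf{nhom}, take normal components of \rf{M1a} on $\Gamma$ and $\de\Om$ and apply Lemma \ref{L00}:
\[
(\curl\Eb)_\nu=-\div_\tau\Eb_\tau=\I\om(\mu\Hb)_\nu+K_\nu, \qquad (\curl\Hb)_\nu=-\div_\tau\Hb_\tau=-\I\om(\ep\Eb)_\nu+J_\nu .
\]
Taking jumps on $\Gamma$, and traces on $\de\Om$ for the first identity, produces exactly \rf{nhom}, once $D_\nu^\Gamma,B_\nu^\Gamma,B_\nu^0$ are defined as the corresponding jumps and traces of $\n\cdot\ep\Eb$ and $\n\cdot\mu\Hb$. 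These make sense in $H^{1/2}$ by \rf{div1a}.

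Next, applying $\div$ to \rf{M1a} gives
\[
0=\I\om\div(\mu\Hb)+\div\K, \qquad 0=-\I\om\div(\ep\Eb)+\div\bm J .
\]
Defining $k,j$ by \rf{nhom1} therefore makes \rf{L2ae} hold. With $\alpha=$ const and $\beta=0$, \rf{M1ae} coincides with \rf{M1a}, and all conditions \rf{Lae}--\rf{Lce} hold with $\alpha^\Gamma=\beta^\Gamma=\beta^0=0$.

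\emph{Part (ii).} This is the substantive direction. Let $(\Eb,\alpha,\Hb,\beta)$ solve the extended problem.

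\textbf{Step 1: interior equations.} Apply $\div$ to \rf{M1ae} in $\Om\setminus\Gamma$, using $\div\curl=0$:
\[
\div(\mu\nabla\alpha)=\I\om\div(\mu\Hb)+\div\K=-\I\om k+\div\K=0
\]
by \rf{L2ae} and \rf{nhom1}. In the same way,
\[
\div(\ep\nabla\beta)=-\I\om\div(\ep\Eb)+\div\bm J=\I\om j+\div\bm J=0 .
\]
This step needs $\div\K,\div\bm J\in L_2(\Om\setminus\Gamma)$, which is exactly the hypothesis taken from Lemma \ref{l22}.

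\textbf{Step 2: conormal data.} Take the normal component of the first equation of \rf{M1ae} and apply Lemma \ref{L00}, in $H^{-1/2}$:
\[
(\mu\nabla\alpha)_\nu=-\div_\tau\Eb_\tau-\I\om(\mu\Hb)_\nu+K_\nu .
\]
Taking the jump on $\Gamma$ and inserting \rf{Lae}, \rf{Lbe} and the first relation of \rf{nhom} gives
\[
\llbracket\de\alpha/\de\hat{\n}\rrbracket=-\div_\tau\Eb_\tau^\Gamma-\I\om B_\nu^\Gamma+K_\nu^\Gamma=0 .
\]
On $\de\Om$, \rf{Lce} and the third relation of \rf{nhom} give $\de\alpha/\de\hat{\n}=0$. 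Symmetrically, the second relation of \rf{nhom} yields $\llbracket\de\beta/\de\hat{\n}\rrbracket=0$ on $\Gamma$.

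Each trace identity has to be read in $H^{-1/2}$: the conormal derivatives of an $H^1$ solution of a divergence-form equation are defined as in \cite{McLean:2000}, and the normal traces of $\K,\bm J$ are the ones supplied by Lemma \ref{l22}. Making these identities rigorous is where I expect the only real care to be needed; it amounts to checking that all pairings are taken in $H^{-1/2}$ and that the weak definitions agree with one another.

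\textbf{Step 3: conclude.} We have $\llbracket\alpha\rrbracket=\llbracket\beta\rrbracket=0$ and $\beta^0=0$. Apply Green's first identity on $\Om_\pm$ to $\alpha$ and $\beta$ and add the two pieces; the interface terms cancel. Using the positive-definite real parts of $\mu$ and $\ep$, this gives $\nabla\alpha=\nabla\beta=0$. Hence $\alpha$ is constant, and $\beta=0$ because of the Dirichlet condition on $\de\Om$.

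Then \rf{M1ae} reduces to \rf{M1a}, and \rf{Lae}--\rf{Lce} contain \rf{Maa}--\rf{Mca}. So $(\Eb,\Hb)$ solves the Maxwell problem.
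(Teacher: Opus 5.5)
Your proof takes the same route as the paper. In part (i) you take normal components of \rf{M1a} via Lemma \ref{L00} and apply $\div$ to \rf{M1a}. In part (ii) you apply $\div$ to \rf{M1ae} to get \rf{LapA}, use \rf{nhom} to show the conormal jumps vanish, and finish with Green's identity. The structure is sound. However, the sign bookkeeping, which is essentially the whole content of Step 2, is wrong in two places, and as written the key cancellations fail.

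First, in Step 2, moving $(\curl\Eb)_\nu=-\div_\tau\Eb_\tau$ to the right-hand side of the normal component of \rf{M1ae} gives
\[
(\mu\nabla\alpha)_\nu=\div_\tau\Eb_\tau+\I\om(\mu\Hb)_\nu+K_\nu ,
\]
not $-\div_\tau\Eb_\tau-\I\om(\mu\Hb)_\nu+K_\nu$. With your version, the first relation of \rf{nhom} yields $\llbracket\de\alpha/\de\hat{\n}\rrbracket=2K_\nu^\Gamma$, and the third relation gives $2K_\nu^0$ on $\de\Om$; neither is zero in general. With the corrected version the jump is $\div_\tau\Eb_\tau^\Gamma-\div_\tau\Eb_\tau^\Gamma=0$, as you intended.

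Second, your own computation in part (i) does not give ``exactly'' the second relation of \rf{nhom}. Taking the jump of $-\div_\tau\Hb_\tau=-\I\om(\ep\Eb)_\nu+J_\nu$ gives
\[
\I\om D_\nu^\Gamma-J_\nu^\Gamma=\div_\tau\Hb_\tau^\Gamma ,
\]
whereas the statement prints $J_\nu^\Gamma+\I\om D_\nu^\Gamma=\div_\tau\Hb_\tau^\Gamma$. Since $\I\om\ep\Eb$ and $\bm J$ enter \rf{M1a} with opposite signs, the printed sign of $J_\nu^\Gamma$ is a misprint.

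This matters for part (ii). The normal component of the second equation of \rf{M1ae} gives $\llbracket(\ep\nabla\beta)_\nu\rrbracket=\div_\tau\Hb_\tau^\Gamma-\I\om D_\nu^\Gamma+J_\nu^\Gamma$. This vanishes under the corrected relation but equals $2J_\nu^\Gamma$ under the printed one, in which case Green's identity no longer forces $\beta=0$. So your ``symmetrically'' step only works once the hypothesis is read with the corrected sign. The same holds for the paper's ``interchange $\Eb$ and $\Hb$'' step, which passes over this point.

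With both signs fixed, your argument is complete and coincides with the paper's.
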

\begin{remark}
    Let us stress that the existence of additional smoothness of ${\bm J}, \K$ in the statement $(i)$ is the consequence of 
    Maxwell's equations. In statement $(ii)$, this additional smoothness, in particular, the existence of the traces of the normal components of $\K, {\bm J}$, is an assumption.
\end{remark}
\begin{proof} Let $(\Eb,\Hb)$ be a solution of the transmission problem \rf{M1a}-\rf{Mca} and $\alpha=const.,~\beta=0$. Then \rf{M1ae} holds. We apply the operator $\div$ to the relations \rf{M1a} and obtain \rf{L2ae} with $k,j$ satisfying \rf{nhom1}. The relations \rf{nhom} follow from \rf{M1a} and Lemmas \ref{l22}, \ref{L00}. The first statement of the theorem is proved. 

To prove the second statement, we apply
 $\div$ to \rf{M1ae} and obtain that
\[
\div (\mu \nabla \alpha) = \div(\I \om \mu \Hb)+\div \K,~~ \quad~
  \div (\ep \nabla \beta)  = - \div(\I \om \ep \Eb)+\div \bm J,~~\quad~ \x\in\Om\setminus\Gamma.
\]
This, \rf{L2ae} and the first two relations in \rf{nhom1} imply
\begin{align}
\label{LapA}
\div (\mu \nabla \alpha) = 
  \div (\ep \nabla \beta) = 0,~~\quad~ \x\in\Om\setminus\Gamma.
\end{align}
  
  Furthermore, from Lemma \ref{L00} and Remark \ref{re2} it follows that  $\llbracket(\curl\Eb)_\nu\rrbracket=-\llbracket\div_\tau\Eb_\tau\rrbracket, ~\x\in\Gamma$. Hence, the equality of the normal components in the first relation in \rf{M1ae} combined with the first relation in \rf{nhom}, implies that $\llbracket(\mu \nabla\alpha)_\nu\rrbracket=0$, i.e. $\llbracket\de\alpha/\de\hat{\n}\rrbracket=0,~\x\in\Gamma$, where $\hat{\n}$ is conormal to $\Gamma$ for the operator $\div (\mu \nabla \cdot)$.
 The same arguments using $\de\Om$ instead of $\Gamma$ lead to $\de\alpha/\de\hat{\n}=0, ~\x\in \de\Om$. If we interchange $\Eb$ and $\Hb$ in the reasoning presented above, we will obtain $\llbracket\de\beta/\de\hat{\n}\rrbracket=0, ~\x\in\Gamma$. Here, $\hat{\n}$ is conormal to $\Gamma$ for the operator $\div (\ep \nabla \cdot)$. Together with the boundary conditions for $\alpha, \beta$ in \rf{nhom1} and relations \rf{LapA}, this leads to the conclusion that $\alpha=const., ~\beta=0$.  Thus,  $(\Eb,\Hb)$ satisfies relations \rf{M1a}-\rf{Mca}.
 \end{proof}

\section{The Resolvent and Eigenvalues}
\label{sec5}
\setcounter{equation}{0}

In this section, we study the spectrum and resolvent of the Maxwell equations and of the corresponding elliptic problem, and therefore, we consider nonhomogeneous equations with homogeneous boundary conditions.

We begin with the elliptic problem and obtain the results for the Maxwell equations as a consequence. It is expedient to rewrite the problem \rf{M1ae}-\rf{Lce}
in matrix form. To this end, we introduce matrix operators $\L_\tau$ and $\L_\nu$ acting on four-dimensional vector functions $\v = (\V, \gamma),~\V \in \C^3, ~\gamma \in \C$, using the matrices $\L_{\ep,\mu}$ and $\L_{\mu,\ep}$ defined in \rf{Lem}. Namely,
\begin{align}
\label{5.1}
    \L_\tau \v = \L_{\mu,\ep} \v
\end{align}
with the domain $D(\L_\tau)$ consisting of functions $\v\in H^1(\Om \setminus \Gamma)$ such that 
\begin{align}
\label{5.2}
    \llbracket\V_\tau\rrbracket=\z, \quad \llbracket\n\cdot \ep \V\rrbracket=0, \quad \llbracket\gamma\rrbracket= 0 , \quad \x\in \Gamma;\quad \qquad \left. \V_\tau \right|_{\de \Om} = \z.
\end{align}
The operator $ \L_\nu$ is defined as follows: 
\begin{align}
    \L_\nu \w = \L_{\ep,\mu} \w
\end{align}
with the domain $D(\L_\nu)$ consisting of functions $\w = (\W, \delta) \in H^1(\Om \setminus \Gamma),~\W \in \C^3, ~\delta \in \C$, such that 
\begin{align}
\label{5.4}
    \llbracket\W_\tau\rrbracket=\z, \quad \llbracket\n\cdot \mu \W\rrbracket=0, \quad \llbracket\delta\rrbracket= 0 , \quad \x\in \Gamma; \quad \qquad  \left. \n\cdot \mu \W\right|_{\de \Om} =\left. \delta \right|_{\de \Om} = 0.
\end{align}
Now, the problem \rf{M1ae}-\rf{Lce} with $\u = (\Eb, \alpha, \Hb, \beta) \in H^1(\Om \setminus \Gamma)$, right-hand side of \rf{M1ae}-\rf{L2ae} in $L^2$, and with homogeneous boundary conditions in \rf{Lae}-\rf{Lce} can be written as
\begin{align}
\label{sys}
    \L \u - \om\, \B \u = \Fb, 
\end{align}
where 
\begin{align}
\label{5.6}
\L = \left(
    \begin{array}{cc}
      \z & \I\, \L_\nu \\[2mm]
      -\I\, \L_\tau & \z
    \end{array}
    \right), 
\end{align}
the operator $\B$ is defined in \rf{Lem}, and
components of $\Fb := \I\, ( {\bm J}, j, -\K, - k )$ 
are defined by the right-hand sides of \rf{M1ae}-\rf{L2ae} and  taken in the prescribed order.

We denote $\L_{\tau^\ast}$ the operator  $\L_\tau$ 
with the matrices, $\ep$ and $\mu$ in its definition  (the domain \rf{5.2} and the map \rf{5.1}) are replaced by their adjoints $\ep^\ast$ and $\mu^\ast$. The operator $\L_{\nu^\ast}$ is defined similarly.
\begin{lemma}
\label{l5.1}
    The operators $\L_\nu$ and $\L_{\tau^\ast}$ are adjoint to each other. The same property holds for $\L_{\tau}$  and $\L_{\nu^\ast}$.
\end{lemma}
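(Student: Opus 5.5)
Throughout, $\L_\nu$ and $\L_{\tau^\ast}$ are regarded as unbounded operators in $L_2(\Om)^4$. The plan has two parts. First I verify the Green (integration-by-parts) identity
\[
(\L_\nu \w,\v)_{L_2(\Om)}=(\w,\L_{\tau^\ast}\v)_{L_2(\Om)},\qquad \w\in D(\L_\nu),~\v\in D(\L_{\tau^\ast}).
\]
This gives $\L_{\tau^\ast}\subset \L_\nu^\ast$. Then I use the ellipticity of Theorem \ref{t1} to obtain the reverse inclusion.

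For the identity, write $\w=(\W,\delta)$ and $\v=(\V,\gamma)$. Then
\[
(\L_\nu\w,\v)=\int_{\Om\setminus\Gamma}\big(\curl\W\cdot\bar\V+\ep\nabla\delta\cdot\bar\V-\div(\mu\W)\,\bar\gamma\big)\,d\x .
\]
I integrate by parts separately in $\Om_+$ and $\Om_-$. This is legitimate for $H^1$ functions on Lipschitz domains, with traces in $H^{1/2}$. Three formulas are used:
\[
\int\curl\W\cdot\bar\V=\int\W\cdot\overline{\curl\V}+\oint(\n\times\W)\cdot\bar\V,
\]
\[
\int\ep\nabla\delta\cdot\bar\V=-\int\delta\,\overline{\div(\ep^\ast\V)}+\oint\delta\,\overline{\n\cdot\ep^\ast\V},
\]
\[
-\int\div(\mu\W)\bar\gamma=\int\W\cdot\overline{\mu^\ast\nabla\gamma}-\oint(\n\cdot\mu\W)\bar\gamma .
\]
The volume terms reproduce exactly $(\w,\L_{\mu^\ast,\ep^\ast}\v)=(\w,\L_{\tau^\ast}\v)$. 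For the boundary terms I check each piece separately.

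\emph{Terms on $\de\Om$.} The first term vanishes because $(\n\times\W)\cdot\bar\V=-\W\cdot(\n\times\bar\V)$ and $\V_\tau=0$. The second vanishes because $\delta=0$. The third vanishes because $\n\cdot\mu\W=0$.

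\emph{Terms on $\Gamma$.} Each surface integral becomes a difference of $+$ and $-$ traces. Since $\n\times\W$ depends only on the tangential part of $\W$, the term $(\n\times\W)\cdot\bar\V$ is continuous across $\Gamma$ when both $\llbracket\W_\tau\rrbracket=0$ and $\llbracket\V_\tau\rrbracket=0$; hence its jump cancels. The term $\delta\,\overline{\n\cdot\ep^\ast\V}$ is continuous because $\llbracket\delta\rrbracket=0$ and $\llbracket\n\cdot\ep^\ast\V\rrbracket=0$. The term $(\n\cdot\mu\W)\bar\gamma$ is continuous because $\llbracket\n\cdot\mu\W\rrbracket=0$ and $\llbracket\gamma\rrbracket=0$.

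Thus all boundary contributions vanish, which proves the identity. The same computation with the roles of the two operators exchanged gives $\L_{\nu^\ast}\subset\L_\tau^\ast$.

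The main obstacle is the reverse inclusion $D(\L_\nu^\ast)\subset D(\L_{\tau^\ast})$, i.e. a regularity statement. Take $\v\in L_2$ with $\L_\nu^\ast\v=\bm g\in L_2$.

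\emph{Step 1: interior regularity.} Testing against compactly supported $\w$ in $\Om_\pm$ shows that $\L_{\mu^\ast,\ep^\ast}\v=\bm g$ in the distributional sense. This operator is an elliptic first-order system: its principal part is that of Theorem \ref{t1} with adjoint coefficients. Interior elliptic regularity then gives $\v\in H^1_{loc}(\Om\setminus\Gamma)$.

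\emph{Step 2: boundary regularity.} Near $\Gamma$ and $\de\Om$ the conditions \rf{5.2}, with $\ep,\mu$ replaced by $\ep^\ast,\mu^\ast$, are exactly the boundary conditions adjoint to \rf{5.4} in the Green formula above. The problem formed by $\L_{\tau^\ast}$ with these conditions is of the same type as the problem \rf{L1}--\rf{Lc}. Its Shapiro--Lopatinsky condition is therefore verified as in Theorem \ref{t1}, since the frozen problem splits in the same way.

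\emph{Step 3: conclusion.} I then invoke the standard theorem that a weak (very weak) $L_2$ solution of an elliptic transmission problem satisfying the adjoint conditions belongs to $H^1$ up to the boundary. To make this rigorous with $W^{1,\infty}$ coefficients and Lipschitz $\Gamma,\de\Om$, I would use difference quotients in local flattened coordinates, or mollify $\v$ tangentially. Once $\v\in H^1(\Om\setminus\Gamma)$ is known, I apply the Green identity with arbitrary $\w\in D(\L_\nu)$. The vanishing of all boundary terms then forces the conditions \rf{5.2} for $\ep^\ast$, because the traces of $\W_\tau$, $\delta$ and $\n\cdot\mu\W$ on $\Gamma$ and $\de\Om$ can be chosen independently. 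Hence $\v\in D(\L_{\tau^\ast})$.

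By symmetry (replace $\ep,\mu$ with their adjoints) the same argument gives $\L_{\tau}^\ast=\L_{\nu^\ast}$.
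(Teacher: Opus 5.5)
Your Green-identity part is correct, and it is the paper's entire proof. The paper performs the same three integrations by parts and notes that the $\Gamma$-contributions cancel because of the zero jumps in \rf{5.2} and \rf{5.4}. Your observation that each surface integrand $(\n\times\W)\cdot\bar\V$, $\delta\,\overline{\n\cdot\ep^\ast\V}$, $(\n\cdot\mu\W)\bar\gamma$ is continuous across $\Gamma$ is exactly this. The $\de\Om$-terms then vanish by the boundary conditions. So the paper establishes $(\L_\nu\w,\v)=(\w,\L_{\tau^\ast}\v)$ on the stated domains, i.e.\ $\L_{\tau^\ast}\subset\L_\nu^\ast$, and reads ``adjoint to each other'' in this sense. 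It never attempts the reverse inclusion.

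\textbf{The gap is your Step 3.} It asserts that every element of $D(\L_\nu^\ast)$ lies in $H^1(\Om\setminus\Gamma)$. Under the paper's hypothesis that $\de\Om$ and $\Gamma$ are only Lipschitz, this is false, so no rigorization can close it. Your proposed tools do not help either. Flattening a Lipschitz surface by a bi-Lipschitz map leaves only $L^\infty$ coefficients, so tangential difference quotients give no bound. The Shapiro--Lopatinsky condition is not even defined at edges and corners.

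Here is a concrete counterexample. Let $\Om$ be a nonconvex polyhedron with a reentrant edge of opening $\theta_0\in(\pi,2\pi)$, let $\Om_-$ be a ball far from that edge, and take $\ep=\mu=I$. Let $\phi\in H^1_0(\Om)$ satisfy $\Delta\phi=f\in L_2(\Om)$, with the edge singularity $r^{\pi/\theta_0}\sin(\pi\theta/\theta_0)$, so that $\nabla\phi\notin H^1$. Put $\v=(\nabla\phi,0)$. For every $\w=(\W,\delta)\in D(\L_\nu)$, the conditions \rf{5.4} give $\W\in H^1(\Om)$ and $\delta\in H^1_0(\Om)$. Hence $(\curl\W,\nabla\phi)=0$ and $(\nabla\delta,\nabla\phi)=-(\delta,f)$, that is, $(\L_\nu\w,\v)=(\w,(\z,-f))$. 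Thus $\v\in D(\L_\nu^\ast)$ but $\v\notin D(\L_{\tau^\ast})$. In this generality the Hilbert-space adjoint of $\L_\nu$ is strictly larger than $\L_{\tau^\ast}$, and the lemma holds only in the formal sense that the paper (and your first half) proves.

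If you want genuine operator adjointness, add the hypothesis that $\Gamma$ and $\de\Om$ are smooth (e.g.\ $C^{1,1}$). Then your Steps 1--3 go through: the adjoint transmission problem is elliptic, $L_2$ solutions are $H^1$ up to the boundary, and your final recovery of \rf{5.2} (with $\ep^\ast$) from independently prescribable traces of $\W_\tau$, $\delta$, $\n\cdot\mu\W$ is correct.
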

\begin{proof}
Denote by $(\cdot, \cdot)$ the scalar product in $L^2 (\Om)$ and 
$\langle \cdot, \cdot \rangle$ the scalar product in $L^2 (\de \Om)$. 
The first statement of the Lemma follows immediately from the following Green's identities that can be written in $\Om$ instead of getting them as the sum of the corresponding identities in $\Om_{-}$ and $\Om_{+} = \Om \setminus \bar{\Om}_-$ due to zero jumps in the first three relations in \rf{5.2} and \rf{5.4}.
\begin{align}
    (\W, \curl \V) - (\curl \W, \V) &= \langle \W, \n \times \V  \rangle =0,
    \quad \text{since} \quad \left. \V_\tau\right|_{\de \Om} = \z, \\[2mm]
     (\ep \nabla \delta, \V) + (\delta, \div (\ep^\ast \V) ) &= \langle \delta ,\n \cdot \ep^\ast \V,  \rangle =0,
    \quad \text{since} \quad \left. \delta \right|_{\de \Om} = 0, \\[2mm]
    (\div (\mu \W), \gamma) + (\W, \mu^\ast\nabla \gamma) &= \langle \n \cdot \mu\W, \gamma \rangle =0,
    \quad \text{since} \quad \left. \n \cdot \mu \W \right|_{\de \Om} = 0.
\end{align}
The proof of the second statement is absolutely similar.
\end{proof}
To solve \rf{sys}, we need to find the kernel and cokernel of the operator $\L$.
\begin{lemma}
    \label{l5.2}
    The kernel and cokernel of the operator $\L$ in \rf{5.6} are 
    one-dimensional and consist of vector functions proportional to ${\bm \ell} = (\z,1,\z,0)$.
\end{lemma}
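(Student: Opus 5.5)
Since $\L$ has off-diagonal block form, $\L\u=0$ for $\u=(\v,\w)$ with $\v=(\Eb,\alpha)\in D(\L_\tau)$, $\w=(\Hb,\beta)\in D(\L_\nu)$ (ordering as in $\Fb$ and \rf{5.6}) splits into the two decoupled problems $\L_\nu\w=0$ and $\L_\tau\v=0$. Here $\w=(\Hb,\beta)$ satisfies $\curl\Hb+\ep\nabla\beta=0$, $\div(\mu\Hb)=0$ with the conditions \rf{5.4}, and $\v=(\Eb,\alpha)$ satisfies $\curl\Eb+\mu\nabla\alpha=0$, $\div(\ep\Eb)=0$ with the conditions \rf{5.2}. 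By Lemma \ref{l5.1}, the cokernel of $\L$ is the kernel of $\L^\ast$, which is again an operator of the form \rf{5.6} built from $\L_{\nu^\ast}$, $\L_{\tau^\ast}$ with $\ep^\ast,\mu^\ast$. So it suffices to compute the kernel for arbitrary admissible $\ep,\mu$; the cokernel then follows by replacing them with their adjoints. The spaces are finite-dimensional by Theorem \ref{t1e}, but I would compute them directly.

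For $\L_\tau\v=0$, I would apply $\div$ to the first equation to get $\div(\mu\nabla\alpha)=0$ in $\Om\setminus\Gamma$. Exactly as in the proof of Theorem \ref{t0}, Lemma \ref{L00} together with $\llbracket\Eb_\tau\rrbracket=0$ on $\Gamma$ and $\Eb_\tau=0$ on $\de\Om$ gives $\llbracket(\mu\nabla\alpha)_\nu\rrbracket=0$ on $\Gamma$ and $(\mu\nabla\alpha)_\nu=0$ on $\de\Om$, while $\llbracket\alpha\rrbracket=0$. Green's first identity gives $(\mu\nabla\alpha,\nabla\alpha)=0$, and $\mathrm{Re}\,\mu>0$ then gives $\alpha=\mathrm{const}$. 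Hence $\curl\Eb=0$ and $\div(\ep\Eb)=0$ in $\Om\setminus\Gamma$, with $\llbracket\Eb_\tau\rrbracket=0$, $\llbracket\n\cdot\ep\Eb\rrbracket=0$ and $\Eb_\tau|_{\de\Om}=0$. The main obstacle is to show $\Eb=0$, which needs topological assumptions. If $\Om$ is simply connected, then $\Eb=\nabla\varphi$ with $\varphi\in H^1(\Om)$ (the tangential continuity makes $\curl\Eb=0$ in all of $\Om$). Then $\varphi$ is constant on the connected boundary $\de\Om$, and $\div(\ep\nabla\varphi)=0$ with the transmission conditions, so Green's identity applied to $\varphi-\mathrm{const}$ gives $\nabla\varphi=0$. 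In general the kernel contains Dirichlet/Neumann harmonic fields of dimension equal to Betti numbers. I would therefore state the assumption that $\Om$ is simply connected with connected boundary, which the claimed one-dimensionality implicitly requires.

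For $\L_\nu\w=0$, applying $\div$ gives $\div(\ep\nabla\beta)=0$. The same jump argument gives $\llbracket(\ep\nabla\beta)_\nu\rrbracket=0$ on $\Gamma$, and now $\beta=0$ on $\de\Om$, so Green's identity yields $\beta=0$. Then $\curl\Hb=0$ and $\div(\mu\Hb)=0$ with $\n\cdot\mu\Hb=0$ on $\de\Om$ and the transmission conditions. Writing $\Hb=\nabla\psi$ gives a Neumann problem for $\div(\mu\nabla\cdot)$, so $\psi$ is constant and $\Hb=0$. Altogether the kernel is spanned by $\bm\ell=(\z,1,\z,0)$. The same computation with $\ep^\ast,\mu^\ast$ shows that the cokernel is also spanned by $\bm\ell$.
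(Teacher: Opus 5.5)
Your argument is correct and follows essentially the same route as the paper: you split $\L\u=\z$ into $\L_\tau\v=\z$ and $\L_\nu\w=\z$, use the Green's-identity argument of Theorem~\ref{t0} to get $\alpha=\mathrm{const}$ and $\beta=0$, write the curl-free fields as gradients that solve a Dirichlet-type and a Neumann-type transmission problem, and obtain the cokernel from Lemma~\ref{l5.1} by passing to $\ep^\ast,\mu^\ast$. Your topological caveat is also well founded: the paper's steps ``$\curl\V=\z$ implies $\V=\nabla\phi$'' and ``$\phi$ is constant on $\de\Om$'' tacitly assume that $\Om$ is simply connected with connected boundary, and without this (e.g.\ a solid torus or a spherical shell) the kernel is indeed larger than $\mathrm{span}\,\{\bm\ell\}$.
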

\begin{proof}
Let us find the kernel of $\L$.
    Equation $\L \u = \z$ with $\u = (\v,\w)$ is equivalent to $\L_\tau \v = \z$  and $\L_\nu \w = \z$. The arguments used to prove Theorem \ref{t0} implies that $\gamma = const.$ and $\delta = 0$. Therefore,
     $\curl \V = \z$ and $\div (\ep \V) = 0$ 
     in $\Om \setminus \Gamma$. From the former relation it follows that $\V = \nabla \phi$, and the 
     second relation leads to $\div (\ep \nabla\phi) = 0$ on $\Om \setminus \Gamma$. 
     Then the equation $\div (\ep \nabla\phi) = 0$ holds throughout the entire $\Om$ due to the first two relations in \rf{5.2}.
     The last condition in \rf{5.2} implies that $\phi$ is constant on $\de \Om$.
     Thus, $\phi$ is constant on $\Om$ and therefore $\V = \z$. Similarly, $\W = \nabla \psi$, with the relation $\div (\mu \nabla \psi) = 0$ valid on $\Om$. Since the conormal derivative of $\psi$ vanishes on $\de \Om$ due to the last relation in \rf{5.4}, we obtain that $\psi = const.$ and, therefore, $\W = \z$. The statement concerning the kernel of $\L$ is proved. To find the cokernel of $\L$, one can look for the kernel of the adjoint operator, which has the form of \rf{5.6} with $\nu,\tau$ replaced by $\nu^\ast, \tau^\ast$, respectively. Thus, the arguments above imply that the cokernel is also proportional to ${\bm \ell}$.
\end{proof}
Denote by $L_{2,\perp} (\Om), H^{1}_{\perp} (\Om\setminus \Gamma)$ the 
spaces of functions in $L_2, H^1$, respectively, that are $L_2$-orthogonal to ${\bm \ell}$. Clearly, functions proportional to  ${\bm \ell} = (\z,1,\z,0)$ belong to the kernel and cokernel of the operator $\L- \om\, \B$ for any $\om$. We consider $\om$ in \rf{sys} as a spectral parameter. It is natural to consider the resolvent $\Res_\om$ in the spaces of functions orthogonal to ${\bm \ell}$: 
\begin{align}
\label{5.10}
         \u = \Res_\om\, {\bm F}, \quad \Res_\om := (\L - \om \B)^{-1}: ~  L_{2,\perp} \to H^{1}_{\perp}.
\end{align}
 The eigenfunctions of the operator $\L  - \om  \B $ will be defined as nontrivial solutions of the homogeneous equation \rf{sys} that are orthogonal to ${\bm \ell}$. Otherwise, any $\om$ is an eigenvalue with an eigenfunction proportional to ${\bm \ell}$.
\begin{theorem} 
\label{t5.1}
The resolvent \rf{5.10} is
a meromorphic function of $\om$. If $\ep$ and $\mu$ are real positive-definite matrices, then $\Res_\om$ is finite-meromorphic in $\om$ with real simple poles located symmetrically on the real line and with residues of finite ranks that are complex-conjugate for the symmetric $\om$.
\end{theorem}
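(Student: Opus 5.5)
The plan is to use analytic Fredholm theory for the elliptic problem, together with self-adjointness in a suitable weighted setting for the second statement.

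\textbf{Step 1: Fredholm property and invertibility at one point.} By Theorem \ref{t1e}, the problem \rf{sys} with homogeneous boundary conditions is elliptic. Hence $\L-\om\B: D(\L)\subset H^1(\Om\setminus\Gamma)\to L_2$ is Fredholm of index zero for every $\om$, and $\om\B$ is a compact perturbation, since $H^1\hookrightarrow L_2$ is compact on bounded Lipschitz domains. By Lemma \ref{l5.2}, $\L=\L-0\cdot\B$ has kernel and cokernel spanned by ${\bm\ell}$. The vector ${\bm\ell}$ is also in the kernel and cokernel of $\L-\om\B$, because $\B{\bm\ell}=0$. Therefore the restriction $\L-\om\B: H^1_\perp\cap D(\L)\to L_{2,\perp}$ is well defined: the range is orthogonal to ${\bm\ell}$, since the cokernel contains ${\bm\ell}$. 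At $\om=0$ this restriction is invertible.

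\textbf{Step 2: Meromorphy.} Write $\L-\om\B=\L(I-\om\L^{-1}\B)$ on the orthogonal complements. The operator $\L^{-1}\B$ is compact on $H^1_\perp$: $\B$ maps $H^1$ compactly into $L_2$, then we project onto $L_{2,\perp}$ (this projection is harmless because $\B{\bm\ell}=0$), and apply $\L^{-1}$. The analytic Fredholm theorem then shows that $(I-\om\L^{-1}\B)^{-1}$ is meromorphic in $\C$ with finite-rank principal parts, since it is invertible at $\om=0$. This gives the first claim.

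\textbf{Step 3: The real symmetric case.} For real positive-definite $\ep,\mu$, Lemma \ref{l5.1} gives $\L_\nu^*=\L_\tau$, so $\L$ in \rf{5.6} is symmetric: its adjoint has $-\I$ and $\I$ in swapped positions, which is the same operator. By Step 1 it is self-adjoint on $L_{2,\perp}$ with compact inverse $\L^{-1}$. The main obstacle is that $\B$ is only nonnegative and vanishes on the $\alpha,\beta$ components, so $\L^{-1}\B$ cannot be symmetrized directly by $\B^{1/2}$ on the whole space. I would handle this as follows. Poles $\om$ correspond to nontrivial solutions of $\L\u=\om\B\u$ with $\u\perp{\bm\ell}$. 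Consider the operator $T=\B^{1/2}\L^{-1}\B^{1/2}$ on $L_{2,\perp}$. It is compact and self-adjoint because $\L^{-1}$ is. If $\L\u=\om\B\u$ with $\om\neq0$, then $\v=\B^{1/2}\u$ satisfies $T\v=\om^{-1}\v$, and $\v\neq0$, since otherwise $\L\u=0$ and so $\u=0$. Hence poles are reciprocals of nonzero eigenvalues of $T$, which are real; they can only accumulate at infinity, with finite multiplicities. Simplicity of the poles (no Jordan chains) follows from self-adjointness of $T$: the resolvent $(\L-\om\B)^{-1}=\L^{-1}+\L^{-1}\B^{1/2}(\ldots)$ can be expressed through $(I-\om T)^{-1}$, which has only simple poles with residues equal to orthogonal spectral projections. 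These projections are finite rank by compactness of $T$.

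\textbf{Step 4: Symmetry of the poles.} Symmetry $\om\leftrightarrow-\om$ with conjugate residues comes from the complex conjugation $C$: since $\ep,\mu$ are real, $C\L C=-\L$ (the factors $\I$ flip sign) and $C\B C=\B$. Hence $C\Res_\om C=-\Res_{-\bar\om}$. Thus a pole at real $\om$ yields a pole at $-\om$, with residue the complex conjugate up to sign.
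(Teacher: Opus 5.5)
Your proposal is correct and follows essentially the same route as the paper: analytic Fredholm theory on the orthogonal complements of ${\bm\ell}$ (you factor as $\L(I-\om\L^{-1}\B)$ on $H^1_\perp$, where the paper uses $\L^{-1}(I-\om\B\L^{-1})^{-1}$ on $L_{2,\perp}$), then the compact self-adjoint operator $\B^{1/2}\L^{-1}\B^{1/2}$ (the paper's $\Q$) with the same resolvent formula, and complex conjugation for the $\om\leftrightarrow-\om$ symmetry. One small correction: because $C$ is antilinear, the identity $C\Res_\om C=-\Res_{-\bar\om}$ shows that the residue at $-\om_0$ is exactly $\overline{A}$, where $A$ is the residue at $\om_0$, so the ``up to sign'' qualifier is unnecessary.
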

\begin{proof}
To prove the meromorphic dependence of $\Res_\om$ we rewrite \rf{5.10} as
\begin{align}
\label{u}
    \Res_\om = \L^{-1} (I - \om \,\B\, \L^{-1})^{-1}: ~  L_{2,\perp} \to H^{1}_{\perp} .
\end{align}
This formula can also be obtained by looking for the solution of \rf{sys} in the form $\u = \L^{-1} \ut, ~ \ut \in  L_{2,\perp}$.
The operator $\B  \L^{-1} $ is compact in $L_{2,\perp}$ since $\L^{-1}$ is compact there (due to the compactness of the embedding of the space $H^1$ into $L_2$) and $\B$ is bounded. 
Thus, the analytic Fredholm theorem implies that $\Res_\om$ is meromorphic in $\om$.

Now, let $\ep$ and $\mu$ be positive-definite matrices. 
Then $\B^{1/2}$ is well-defined by replacing $\ep$ and $\mu$ with $\ep^{1/2}$ and $\mu^{1/2}$ in \rf{Lem}. 
From \rf{sys} it follows that 
\begin{align}
% \label{sys}
    \B^{1/2} \u
    - \om\, \B^{1/2} \L^{-1}\B^{1/2} \B^{1/2} \u =  \B^{1/2}\L^{-1}\, {\bm F},
\end{align}
and therefore, 
\begin{align}
\label{5.14}
    \B^{1/2} \u = \left( I - \om\, \Q \right)^{-1} \B^{1/2}  \L^{-1}{\bm F}.
\end{align}
Here, the operator $\Q  := \B^{1/2} \,\L^{-1}\B^{1/2}$ is self-adjoint in $L_{2,\perp}$ since the operator $\L^{-1}$ is self-adjoint due to \rf{5.6} and Lemma \ref{5.1}. In addition, $\Q$ is compact. Thus, the operator $(I - \om \Q)^{-1}$ in $L_{2,\perp}$ is finite-meromorphic in $\om$ and has simple real poles with residues of finite ranks. Since \rf{sys},\rf{5.14} imply that
\begin{align}
% \label{5.12}
    \u = \L^{-1} (\Fb + \om\,\B \u) = 
    \L^{-1} (I + \om\,\B^{1/2} \left( I - \om\, \Q \right)^{-1} \B^{1/2}  \L^{-1}) {\bm F},
\end{align}
where $ (I - \om \Q)^{-1}$ is a finite-meromorphic operator in $L_{2,\perp}$ and $\L^{-1}, \B^{1/2}:  L_{2,\perp} \to H^{1}_{\perp} $ are bounded. This justifies 
all the properties of $\Res_\om$ except the symmetry of the eigenvalues $\om$ with respect to the origin and the complex conjugation of the corresponding eigenfunctions. The latter properties 
follow from \rf{sys} and \rf{5.6} since 
these relations show that the equation for the eigenvalue $-\om$ is equivalent to the conjugate equation for the eigenvalue $\om$.
\end{proof}

Denote by $L_{2,\B}$ the subspace
of eight-dimensional vector functions
$\u = (\Eb, \alpha, \Hb, \beta)  \in L_2$ for which $\alpha = \beta = 0$ with the weighted dot product defined by
\begin{align}
\label{5.15}
    \left.(\u_1, \u_2)\right|_{L_{2,\pperp}} = \left.(\B \u_1, \u_2)\right|_{L_{2}}.
\end{align}
\begin{theorem}
    \label{t5.2}
    Let $\ep$ and $\mu$ be real positive-definite matrices. 
    Then the eigenfunctions of \rf{sys} belong to $L_{2,\B}$ and one can construct an orthogonal basis in $L_{2,B}$  from the eigenfunctions of the elliptic system \rf{sys}.
\end{theorem}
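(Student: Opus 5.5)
The proof splits into two parts. First we show that every eigenfunction has $\alpha=\beta=0$. Then we obtain the basis from the spectral theorem for the compact self-adjoint operator $\Q=\B^{1/2}\L^{-1}\B^{1/2}$ used in the proof of Theorem \ref{t5.1}.

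\emph{Step 1: eigenfunctions lie in $L_{2,\B}$.} Let $\u=(\Eb,\alpha,\Hb,\beta)\in H^1_\perp$ satisfy $\L\u=\om\B\u$ with $\om\neq 0$. These are equations \rf{M1ae}-\rf{L2ae} with $\K=\bm J=0$, $j=k=0$, and homogeneous boundary conditions \rf{Lae}-\rf{Lce}. Relations \rf{nhom}, \rf{nhom1} then hold trivially, so we can repeat the argument of Theorem \ref{t5}(ii). Applying $\div$ to \rf{M1ae} and using \rf{L2ae} gives $\div(\mu\nabla\alpha)=\div(\ep\nabla\beta)=0$. Lemma \ref{L00} together with the homogeneous jump conditions gives vanishing conormal jumps on $\Gamma$ and the conormal condition on $\de\Om$. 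Green's identity then yields $\alpha=const$ and $\beta=0$. Orthogonality to ${\bm \ell}$ forces $\int_\Om\alpha=0$, hence $\alpha=0$. For $\om=0$ the eigenfunction would lie in $\ker\L$, which by Lemma \ref{l5.2} is spanned by ${\bm \ell}$ and is excluded. So every eigenfunction belongs to $L_{2,\B}$.

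\emph{Step 2: spectral decomposition.} By Lemma \ref{l5.1}, with $\ep,\mu$ real symmetric, $\L$ is self-adjoint, and $\L^{-1}$ is compact and self-adjoint on $L_{2,\perp}$. Hence $\Q$ is compact and self-adjoint on $L_{2,\perp}$. Note that $\B^{1/2}$ maps into vectors with zero $\alpha,\beta$ components, so it maps into $L_{2,\perp}$. The relation $\L\u=\om\B\u$ is equivalent to $\Q\v=\om^{-1}\v$ with $\v=\B^{1/2}\u\neq 0$ and $\u=\om\L^{-1}\B^{1/2}\v$. The map $\u\mapsto\v$ is injective on eigenfunctions because $\u\in L_{2,\B}$ and $\B^{1/2}$ is invertible on the $(\Eb,\Hb)$ components. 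The spectral theorem gives an orthonormal basis of $\overline{\mathrm{Ran}\,\Q}$ consisting of eigenvectors $\v_n$ of $\Q$ with nonzero eigenvalues. Setting $\u_n=\B^{-1/2}\v_n$ on the $(\Eb,\Hb)$ components gives eigenfunctions, and
\[
(\B\u_n,\u_m)=(\v_n,\v_m)=\delta_{nm}.
\]
So the $\u_n$ are orthonormal in $L_{2,\B}$.

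\emph{Step 3: completeness (main obstacle).} Since $\B^{1/2}$ is an isomorphism from $L_{2,\B}$ onto the subspace $X=\{(\Eb,0,\Hb,0)\}\subset L_2$, it suffices to show $\overline{\mathrm{Ran}\,\Q}=X$, i.e. $\ker\Q\cap X=\{0\}$. I would try the following. If $\Q\v=0$ with $\v\in X$, then $\L^{-1}\B^{1/2}\v=0$. Since $\L^{-1}$ is injective, $\B^{1/2}\v=0$, so $\v=0$. The main subtlety is that $\ker\Q$ is not trivial on all of $L_{2,\perp}$: it contains the vectors with only $\alpha,\beta$ components. One must check that restricting to $X$ (equivalently, to $L_{2,\B}$) eliminates exactly this kernel and that $\Q$ leaves $X$ invariant up to the range, with $\mathrm{Ran}\,\Q\subset X$ automatic. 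Care is also needed to verify that $\ker\Q$ being orthogonal to $\mathrm{Ran}\,\Q$ holds in the unweighted $L_2$ product, which is precisely what transfers under $\B^{1/2}$ to the weighted product \rf{5.15}.
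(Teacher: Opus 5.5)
Your Steps 1 and 2 reproduce the paper's proof: divergence of the curl equations, vanishing conormal jumps, Green's identity, then the spectral theorem for $\Q$ and the pull-back $\u_n=\B^{-1/2}\v_n$. Step 3, however, fails, and not just technically. From $\Q\v=\B^{1/2}(\L^{-1}\B^{1/2}\v)=0$ you may only conclude that $\L^{-1}\B^{1/2}\v\in\ker\B^{1/2}=\{(\z,a,\z,b)\}$, not that it vanishes. This kernel is actually attained. Take $b\in H^1_0(\Om)$ and $a\in H^1(\Om)$ with $\int_\Om a=0$. Then $(\z,\I a,\z,-\I b)$ satisfies the conditions \rf{5.2}, \rf{5.4}, is orthogonal to $\bm\ell$, and
\[
\L(\z,\I a,\z,-\I b)=(\ep\nabla b,0,\mu\nabla a,0)=\B(\nabla b,0,\nabla a,0).
\]
Hence $\v=\B^{1/2}(\nabla b,0,\nabla a,0)\in X$ is nonzero whenever $\nabla a\neq 0$ or $\nabla b\neq 0$, yet $\Q\v=\B^{1/2}(\z,\I a,\z,-\I b)=0$. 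So $\ker\Q\cap X$ is infinite-dimensional.

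The same obstruction is visible directly from your Step 1. Every eigenfunction satisfies $\div(\ep\Eb)=\div(\mu\Hb)=0$ in $\Om\setminus\Gamma$, $\llbracket\n\cdot\ep\Eb\rrbracket=\llbracket\n\cdot\mu\Hb\rrbracket=0$ on $\Gamma$, and $\n\cdot\mu\Hb=0$ on $\de\Om$. It is therefore $\B$-orthogonal to every field $(\nabla b,0,\nabla a,0)$ with $b\in H^1_0(\Om)$, $a\in H^1(\Om)$; these are the static, $\om=0$, Maxwell fields. Consequently no family of eigenfunctions can be complete in $L_{2,\B}$.

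The paper's proof has the identical gap. It asserts without proof that ``$\Q$ is invertible on $L_{2,\B}$'', which is false for the reason just given, so you have reproduced the paper's argument together with its flaw.

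What your argument (and the paper's) actually proves is the decomposition $X=\overline{\mathrm{Ran}\,\Q}\oplus(\ker\Q\cap X)$. It follows that the eigenfunctions form a $\B$-orthonormal basis only of the $\B$-orthogonal complement in $L_{2,\B}$ of $\{(\nabla b,0,\nabla a,0):\ b\in H^1_0(\Om),\ a\in H^1(\Om)\}$. This complement is the subspace of fields with $\div(\ep\Eb)=0$ and $\div(\mu\Hb)=0$ weakly in $\Om$ and $\n\cdot\mu\Hb=0$ on $\de\Om$. The completeness statement has to be restricted to this subspace; it cannot be proved for all of $L_{2,\B}$.
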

\begin{proof}
    Let $\u = (\Eb, \alpha, \Hb, \beta) \in H^1 (\Om \setminus \Gamma)$ be a nontrivial solution of $\L \u - \om \B \u = \z$, i.e.
\begin{align}
    \curl \Eb + \mu \nabla \alpha -\I \om \mu \Hb &= \om \ep \Eb, \\[2mm]
    -\div (\ep \Eb) &= 0, \\[2mm]
   \curl \Hb + \ep \nabla \beta +\I \om \ep \Eb &= \om \mu \Hb, \\[2mm]
   -\div (\ep \Hb) &= 0.
\end{align}
We apply the operator $\div$ to the first and third lines and obtain $\div (\mu \nabla \alpha ) =\div (\ep \nabla \beta ) =0.$ After that, one can repeat the arguments used in the last paragraph of the proof of Theorem \ref{t0} to show that $\alpha = const.$ and $\beta = 0$. By definition, $\u$ is orthogonal to ${\bm \ell}$, and therefore $\u \in L_{2,\B}$. 

Let us prove now that there exists an orthogonal basis in $L_{2,\B}$ consisting of eigenfunctions of \rf{sys}. 
Consider the self-adjoint compact operator $\Q$ in $L_{2,\perp}$ defined in \rf{5.14} and equation $\Q \bpsi_i = \lambda_i \bpsi_i$ for its eigenfunctions.
 Since $L_{2,\B}$ and its orthogonal complement $L_{2,\perp}$ are invariant subspaces for $\Q$, 
one can form an orthogonal in $L_2$ basis for the space $L_{2,\B}$ consisting of eigenfunctions $\bpsi_i$. The orthogonal complement to $L_{2,\B}$ consists of the eigenfunctions of $\Q$ with a zero eigenvalue. If $\bpsi_i \in L_{2,\B}$ then $\lambda_i \neq 0$, since $\Q$ is invertible on $L_{2,\B}$. Thus, $\bpsi_i - \om_i\, B^{1/2} \L^{-1} B^{1/2} \psi_i = \z$, where $\om_i = 1/\lambda_i.$ Then $\u_i = B^{-1/2} \bpsi_i$  are the eigenfunctions of \rf{sys} such that $B^{1/2} \u_i$ are orthogonal in $L_2$, i.e., $\{ \u_i\}$ is the basis we need.
\end{proof}
Below, we use the same notation $L_{2,B}$ from \rf{5.15} for the weighted space of six-dimensional solutions $\u = (\Eb, \Hb) \in L_2$ of Maxwell's equations. Combining theorems \ref{t5}, \ref{t5.1}, \ref{t5.2} we obtain the following:
\begin{theorem}
    The spectrum of the Maxwell system is discrete for arbitrary complex-valued $\ep$ and $\mu$. 
    
    If $\ep$ and $\mu$ are real positive-definite matrices, then the eigenvalues are real and symmetric with respect to the origin, with complex-conjugated eigenfunctions for symmetric $\om$.
    One can construct an orthogonal basis in $L_{2,B}$ from the eigenfunctions of the Maxwell system.
\end{theorem}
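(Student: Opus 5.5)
The proof transfers Theorems \ref{t5.1} and \ref{t5.2} to the Maxwell problem through the correspondence of Theorem \ref{t5}. The Maxwell problem is taken with homogeneous boundary conditions $\Eb_\tau^\Gamma=\Hb_\tau^\Gamma=\z$ and $\Eb_\tau^0=\z$. An eigenvalue $\om$ means a nontrivial $H^1(\Om\setminus\Gamma)$ solution of \rf{Mone}--\rf{Mc}.

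First I will show that every Maxwell eigenpair gives an eigenpair of \rf{sys}. Let $(\Eb,\Hb)$ be such a solution. Theorem \ref{t0}(i) with $\alpha=0$, $\beta=0$ gives an element $\u=(\Eb,0,\Hb,0)$ of the domain of $\L$. Since $\Eb_\tau^0=0$, relation \rf{thm31} gives $B_\nu^0=0$, so $\u$ satisfies \rf{5.2} and \rf{5.4}. Moreover $\u$ solves $\L\u-\om\B\u=\z$. This requires $\om\neq0$ so that the divergence conditions follow from \rf{Mone}; the case $\om=0$ must be treated separately, and on $H^1$ it gives only trivial fields by the kernel computation in Lemma \ref{l5.2}. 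The vector $\u$ is orthogonal to ${\bm \ell}$ because $\alpha=0$, and $\u\neq\z$. Hence $\om$ is a pole of $\Res_\om$.

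Conversely, let $\u$ be an eigenfunction of \rf{sys}. The first part of the proof of Theorem \ref{t5.2} works for general complex $\ep,\mu$, and gives $\alpha=const$ and $\beta=0$. Orthogonality to ${\bm\ell}$ then forces $\alpha=0$. So $(\Eb,\Hb)$ is a nontrivial Maxwell eigenfunction, by Theorem \ref{t0}(ii) or Theorem \ref{t5}(ii) with zero data.

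The Maxwell spectrum therefore coincides with the set of poles of $\Res_\om$, and this set is discrete by Theorem \ref{t5.1}. One caveat: the analytic Fredholm theorem also requires the resolvent to exist at some point. At $\om=0$ it does, since $\L$ is invertible on $L_{2,\perp}$ by Lemma \ref{l5.2}.

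For real positive-definite $\ep,\mu$, Theorem \ref{t5.1} gives real poles placed symmetrically about the origin, with complex-conjugate eigenfunctions. These properties pass to the Maxwell pairs through the bijection $(\Eb,\Hb)\leftrightarrow(\Eb,0,\Hb,0)$. Directly, conjugating \rf{Mone} shows that $(\bar\Eb,-\bar\Hb)$ solves the problem with $-\om$ when $\om$ is real.

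For the basis, Theorem \ref{t5.2} gives a $\B$-orthogonal basis of $L_{2,\B}$ made of eigenfunctions $\u_i=(\Eb_i,0,\Hb_i,0)$. The map $(\Eb,0,\Hb,0)\mapsto(\Eb,\Hb)$ is an isometry from $L_{2,\B}$ onto the six-component weighted space. It sends these eigenfunctions to Maxwell eigenfunctions by the previous step, so their images form an orthogonal basis.

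The main obstacle is making the spectral identification airtight: the divergence constraints must come for free when $\om\neq0$, and $\om=0$ must be handled on its own. I would also check that the orthogonality to ${\bm\ell}$ normalization loses no eigenfunctions.
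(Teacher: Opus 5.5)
Your route is the paper's own: it proves the statement simply by ``combining Theorems \ref{t5}, \ref{t5.1}, \ref{t5.2}''. You are right that the correspondence of Theorem \ref{t0}(i) needs $\om\neq0$, and that $\Res_0=\L^{-1}$ exists. But your treatment of $\om=0$ is wrong, and it matters.

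The kernel computation of Lemma \ref{l5.2} uses $\div(\ep\V)=0$, $\div(\mu\W)=0$ and the normal conditions in \rf{5.2}, \rf{5.4}. These belong to the extended problem. At $\om=0$ the Maxwell problem \rf{Mone}--\rf{Mc} only says $\curl\Eb=\curl\Hb=\z$ plus the tangential conditions, and none of those constraints follow. For example, $(\Eb,\Hb)=(\nabla\phi,\z)$ with $\phi\in C_c^\infty(\Om)$, $\phi\not\equiv0$, is a nontrivial $H^1(\Om\setminus\Gamma)$ solution. So is $(\z,\nabla\psi)$ with $\psi\in C^\infty(\bar\Om)$ nonconstant. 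Hence $\om=0$ is a Maxwell eigenvalue of infinite multiplicity that is \emph{not} a pole of $\Res_\om$, and your sentence ``the Maxwell spectrum coincides with the set of poles'' is false. What your argument actually proves is that the \emph{nonzero} Maxwell eigenvalues are exactly the poles of $\Res_\om$. They therefore form a discrete set with finite-dimensional eigenspaces that does not accumulate at $0$. The point $\om=0$ must be excluded or listed separately as an infinitely degenerate eigenvalue; this is exactly the restriction you spotted in Theorem \ref{t0}(i).

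The same oversight breaks your basis step. Every eigenfunction $\u_i$ of \rf{sys} satisfies $\div(\ep\Eb_i)=0$ in $\Om\setminus\Gamma$ and $\llbracket\n\cdot\ep\Eb_i\rrbracket=0$ on $\Gamma$. Integrating by parts gives $(\ep\Eb_i,\nabla\phi)=0$ for all $\phi\in C_c^\infty(\Om)$. So the $\u_i$ are $\B$-orthogonal to the nonzero element $(\nabla\phi,0,\z,0)$ of $L_{2,\B}$ and cannot be complete there. In the proof of Theorem \ref{t5.2} this shows up as a nontrivial kernel of $\Q$ on $L_{2,\B}$: $\Q\,\B^{1/2}(\nabla b,0,\nabla a,0)=\z$ for all $a,b\in H^1(\Om\setminus\Gamma)$ with $\llbracket a\rrbracket=\llbracket b\rrbracket=0$ and $b|_{\de\Om}=0$, so $\Q$ is not invertible there. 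Pushing forward the eigenfunctions supplied by Theorem \ref{t5.2} therefore does not give a basis of the whole weighted space. To get an orthogonal basis of Maxwell eigenfunctions, you must either adjoin an orthogonal basis of the closure of the $\om=0$ eigenspace (the gradient fields above), or restrict the claim to the $\B$-orthogonal complement of that eigenspace.

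A minor slip: for real $\om$, $\ep$, $\mu$, conjugating \rf{Mone} shows that $(\bar\Eb,\bar\Hb)$, not $(\bar\Eb,-\bar\Hb)$, solves the problem with $-\om$. This agrees with $\L\bar\u=-\om\B\bar\u$ from \rf{sys}.
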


% \bibliography{asympexpan}
% \bibliographystyle{elsarticle-num} 

 \newcommand{\noop}[1]{}

\end{document}